\documentclass[12pt]{amsart}
\usepackage[utf8]{inputenc}
\usepackage[T1]{fontenc}
\DeclareFontFamily{U}{mathx}{}
\DeclareFontShape{U}{mathx}{m}{n}{<-> mathx10}{}
\DeclareSymbolFont{mathx}{U}{mathx}{m}{n}
\DeclareMathAccent{\widehat}{0}{mathx}{"70}
\DeclareMathAccent{\widecheck}{0}{mathx}{"71}
\setlength{\oddsidemargin}{0 in}
\setlength{\evensidemargin}{0 in}
\setlength{\textwidth}{6.5 in}
\setlength{\topmargin}{0 in}
\setlength{\textheight}{8.55 in}
\setlength{\headheight}{0.18 in}
\usepackage[usenames]{color}
\usepackage[usenames]{color}
\usepackage{graphicx} 
\usepackage{mathtools,amssymb}
\usepackage{amsthm,amscd}
\usepackage{amsmath,pdfsync,verbatim,graphicx,epstopdf,enumerate,xcolor}
\usepackage[normalem]{ulem}
\usepackage[colorlinks=true]{hyperref}
\usepackage{mathrsfs}
\mathtoolsset{showonlyrefs} 
\hypersetup{allcolors=blue}

\usepackage{cite}
\usepackage{xcolor}
\usepackage[utf8]{inputenc}
\usepackage{tikz}
\usetikzlibrary{calc,trees,positioning,arrows,chains,shapes.geometric,decorations.pathreplacing,decorations.pathmorphing,shapes,matrix,shapes.symbols}

\numberwithin{equation}{section}
\newcommand{\D}{\mathrm{d}}

\renewcommand{\O}{\Omega}

\newcommand{\lb}{\left (}
\newcommand{\rb}{\right)}

\newcommand{\PD}{\partial}

\newcommand{\R}{\mathbb{R}}

\newcommand{\wt}{\widetilde}

\allowdisplaybreaks

\newtheorem{theorem}{Theorem}[section]

\newtheorem{corollary}{Corollary}[section]

\newtheorem{lemma}{Lemma}[section]

\newtheorem{proposition}{Proposition}[section]
\newtheorem{remark}{Remark}[section]

\newcommand{\Nc}{\mathcal{N}}
\newcommand{\vp}{\varphi}
\newcommand{\Oc}{\mathcal{O}}

\newcommand{\ve}{\varepsilon}

\newcommand{\Bc}{\mathcal{B}}

\newcommand{\Sb}{\mathbb{S}}

\renewcommand{\O}{\Omega}

\renewcommand{\o}{\omega}
\renewcommand{\wt}{\widetilde}

\usepackage{bm}
\newcommand{\commutator}[2]{[#1,#2]}

\pagestyle{headings}

\title[Direct and Inverse Problem for bi-wave operator]{Direct and Inverse Problem for bi-wave equation with time-dependent coefficients from partial data}

\author[Bhattacharyya and Kumar]{Sombuddha Bhattacharyya and Pranav Kumar}
\address{Department of Mathematics, Indian Institute of Science Education and Research, Bhopal.
\newline
E-mail:{\tt \ sombuddha@iiserb.ac.in}}
\address {Yau Mathematical Sciences Center, Tsinghua University, Beijing, China.
\newline
E-mail:{\tt \ kumarp@tsinghua.edu.cn, pk958675maths@gmail.com}}

\begin{document}

\begin{abstract}
In this article, we study a direct and an inverse problem for the bi-wave operator $(\Box^2)$ along with second and lower order time-dependent perturbations. In the direct problem, we prove that the operator is well-posed, given initial and boundary data in suitable function spaces. In the inverse problem, we prove uniqueness of the lower order time-dependent perturbations from the partial input-output operator. The restriction in the measurements are considered by restricting some of the Neumann data over a portion of the lateral boundary. 
\end{abstract}

\subjclass[2020]{35R30, 35L05, 35L25, 35L35}
 \keywords{Inverse problems, bi-wave equation, Carleman estimates, time-dependent coefficients, partial data, light ray transform.
}

\maketitle

\section{Introduction and Statement of Results}
In this article, we study a perturbed bi-wave operator, defined on a bounded space-time domain. Bi-wave operators appear in the study of simplified Ginzburg-Landau type model for $d$-wave superconductors \cite{PhysRevB.53.12481,Qiang-d-wave,Xiaobing-Michael-1}. Bi-wave operators are also useful to describe systems consisting of two waves. A long standing problem with bi-wave is described in \cite[Chapter 8.6]{Isakov_book}, where the principal operator is a composition of two wave operators with different speeds. We notice that, even with the same wave speeds, recovering lower order perturbations of a bi-wave operator requires investigation. We believe that our results here can be seen as a step towards dealing with more general bi-wave type operators. In this article we demonstrate a direct problem and an inverse problem for the perturbed bi-wave operator from initial and boundary conditions. The direct problem proves the well-posedness of the operator given the boundary and initial data on suitable function spaces. In the inverse problem, we prove that the coefficients of second and lower order perturbations are uniquely determined by the input-output operator measured at the boundary of the space-time domain.

Here we describe the basic setup of the operator under consideration.
Let $\Omega\subset \R^n, n\geq 3$, be a bounded simply connected open set with smooth boundary $\partial\Omega$. For $T>0$, let us denote $Q:=(0,T)\times \Omega$ and its lateral boundary $\Sigma:=(0,T)\times \partial\Omega$. We consider the following perturbed bi-wave operator 
\begin{equation}\label{operator}
\mathscr{L}_{A,B,C,q}:=\Box^{2}+A(t,x)\Box+B(t,x)\partial_{t} +C(t,x)\cdot \nabla_{x}+q(t,x) \quad \text { in } Q,
\end{equation}
where $\Box^{2}:= \Box \circ \Box =(\partial_{t}^{2}-\Delta_{x})^{2}$ is the bi-wave operator, and $A\in W^{3,\infty}(Q)$, $B\in W^{2,\infty}(Q)$, $q\in L^{\infty}(Q)$ and $C=(C_1,\cdots, C_n)\in \left(W^{2,\infty}(Q)\right)^n$ are the perturbation coefficients. We assume that all the coefficients are time-dependent and real-valued. 
We consider the following initial boundary value problem (IBVP)
\begin{equation}\tag{IBVP}
\begin{aligned}\label{IBVP}
 \begin{cases}
 \mathscr{L}_{A,B,C,q}u =0 & \text { in } Q,\\
 \left (u|_{t=0}, \partial_{t}u|_{t=0}, \partial_{t}^{2}u|_{t=0}, \partial_{t}^{3}u|_{t=0}\right)=\left(\psi_{0},\psi_{1},\psi_{2},\psi_{3}\right) &\text { on }\Omega,\\
 \left( u, \Box u\right)= \left ( f, g\right) &\text { on } \Sigma,
 \end{cases}
 \end{aligned}
 \end{equation}
where $\psi_0, \psi_1, \psi_2, \psi_3$ are the initial and $f, g$ are the boundary data.
Here we work with the time-dependent Sobolev spaces $C^k\big([0,T]; X\big)$, which denote the spaces of $k$-times continuously differentiable functions from $[0,T]$ to a Banach space $X$.

In the direct problem, we prove that \eqref{IBVP} is well-posed, given that the perturbation coefficients and the initial and boundary data are regular enough and satisfy natural compatibility conditions.
\begin{theorem}\label{IBVP-Biwave-wellposedness}
Let $A\in W^{3,\infty}(Q)$, $B, C \in W^{2,\infty}(Q)$ and $q\in L^{\infty}(Q)$ be time-dependent coefficients. Assume $\psi_{j} \in H^{4-j}(\Omega)$ for $j=0,1,2,3$, $f\in H^{4}(\Sigma)$ and $g\in H^{2}(\Sigma)$ satisfying the compatibility conditions
\[
(\psi_{0},\psi_{2}-\Delta_{x}\psi_{0})|_{\partial\Omega}= (f,g)|_{(\{t=0\}\times \partial\Omega)}
\quad\mbox{and}\quad (\psi_{1},\psi_{3}-\Delta_{x}\psi_{1})|_{\partial\Omega}= \partial_{t}(f,g)|_{(\{t=0\}\times \partial\Omega)}.    
\]
Then there exists a unique solution $u(t)$ of \eqref{IBVP} such that
\[  u(t), \Box u(t) \in C\lb [0,T];H^{2}(\Omega)\rb\cap C^1\lb [0,T];H^{1}(\Omega)\rb\cap C^2\lb [0,T];L^{2}(\Omega)\rb 
\]
and a constant $C>0$ such that
\begin{equation}\label{Energy_Est-1}
\begin{aligned}
\max_{0\leq t\leq T}&\left(\sum_{j,k=0,1,2} \lVert \partial^j_{t}(\Box^k u)(t) \rVert_{H^{2-k}(\Omega)}\right) + \sum_{k=0,1} \lVert\partial_{\nu}\Box^k u\rVert_{H^1(\Sigma)}\\
&\leq C \Bigl\{ 
\sum_{k=0,1} \|\psi_{k}\|_{H^{4-k}(\Omega)} 
+ \sum_{k=0,1} \|(\psi_{2+k}-\Delta_{x}\psi_{k})\|_{H^{1-k}(\Omega)}
+ \lVert f\rVert_{H^{4}(\Sigma)}+\lVert g\rVert_{H^{2}(\Sigma)}
\Bigl\}.
\end{aligned}
\end{equation}
\end{theorem}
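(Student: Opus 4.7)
My plan is to reduce the fourth-order IBVP \eqref{IBVP} to a coupled system of two second-order wave equations and then use classical well-posedness together with hidden-regularity estimates for the non-homogeneous Dirichlet wave equation, combined with a Picard iteration to handle the coupling. Setting $v:=\Box u$, the problem \eqref{IBVP} is equivalent to the pair
\[
\Box u = v,\qquad \Box v = -Av - B\partial_t u - C\cdot\nabla_x u - qu \quad \text{in } Q,
\]
subject to $u|_\Sigma = f$, $v|_\Sigma = g$, $(u,\partial_t u)|_{t=0}=(\psi_0,\psi_1)$ and $(v,\partial_t v)|_{t=0}=(\psi_2-\Delta_x\psi_0,\,\psi_3-\Delta_x\psi_1)$. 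The two compatibility conditions stated in the theorem are exactly the corner compatibility conditions at $\{0\}\times\partial\Omega$ needed to solve each scalar Dirichlet wave problem in the $H^2$-regularity framework.

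Given this reduction, I would set up a Picard iteration: starting from $(u_0,v_0)=(0,0)$, for $k\ge 0$ define $(u_{k+1},v_{k+1})$ as the solution pair of
\[
\Box u_{k+1} = v_k,\qquad \Box v_{k+1} = -A v_k - B\partial_t u_k - C\cdot\nabla_x u_k - q u_k,
\]
both carrying the prescribed $(f,g)$ on $\Sigma$ and the prescribed initial data on $\{0\}\times\Omega$. For each scalar step I would invoke the classical theory for the non-homogeneous Dirichlet wave equation in the $H^2$-regularity framework (hidden-regularity estimates of Lasiecka--Lions--Triggiani type), which gives, for a source in $L^1(0,T;H^1(\Omega))$, initial data in $H^2(\Omega)\times H^1(\Omega)$ and boundary data in $H^2(\Sigma)$ satisfying the natural compatibility, a unique solution $w\in C([0,T];H^2(\Omega))\cap C^1([0,T];H^1(\Omega))\cap C^2([0,T];L^2(\Omega))$ together with the hidden-regularity trace $\partial_\nu w\in H^1(\Sigma)$, and continuous dependence in these norms.

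Combining the per-step energy estimate with the $L^\infty$-bounds on the coefficients, a Gr\"onwall argument applied to the differences $(u_{k+1}-u_k,v_{k+1}-v_k)$ yields a Cauchy sequence in the space appearing on the left-hand side of \eqref{Energy_Est-1}, whose limit $(u,\Box u)$ is the desired solution. Uniqueness is then immediate by applying the same energy estimate to the difference of two solutions of \eqref{IBVP} (which satisfies the homogeneous system in $(u,v)$), and \eqref{Energy_Est-1} follows by passing to the limit in the uniform bound produced by the iteration.

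The main technical obstacle I anticipate is twofold. First, one must verify that the right-hand side $-Av_k-B\partial_t u_k-C\cdot\nabla_x u_k-qu_k$ of the $v$-equation has the $L^1_t H^1_x$-regularity needed to drive the $H^2$-level wave theory; this is where the relatively strong assumptions $A\in W^{3,\infty}$ and $B,C\in W^{2,\infty}$ enter, via the Leibniz rule applied to the product terms. Second, one needs $\partial_\nu u,\partial_\nu\Box u\in H^1(\Sigma)$ rather than merely $L^2(\Sigma)$, which requires the $H^2$-lifted version of the hidden-regularity estimate; this is available under our assumptions but must be tracked carefully through the iteration.
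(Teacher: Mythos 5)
Your reduction is precisely the one the paper uses: setting $v=\Box u$ (the paper writes $\vec U=(u,\Box u)^T$) turns \eqref{IBVP} into the $2\times 2$ wave system $\Box u=v$, $\Box v=-Av-B\partial_tu-C\cdot\nabla_xu-qu$ with the stated initial and boundary data, and the compatibility conditions you identify are exactly the ones that make both scalar problems solvable in the $H^2$-framework. The only real difference is how the coupled system is then solved. The paper packages the pair as a single vector-valued wave equation with matrix lower-order coefficients $\mathbf{B},\mathbf{C},\mathbf{q}$, splits $\vec U=\vec v+\vec w$ into a free wave $\vec v$ (quoting \cite{Lassas_Book}) plus a particular solution $\vec w$ with zero data (quoting Lions--Magenes), and then invokes a higher-regularity statement (their Corollary~\ref{Cor_regularity}) together with the hidden-regularity argument of \cite{RM-Matrix-potential} to get $\partial_\nu\vec U\in \mathbf H^1(\Sigma)$. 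You instead run a Picard iteration on the pair and close it with Gr\"onwall, which is essentially an explicit version of what those black-box citations do internally. Both routes are standard and land in the same place; yours is more self-contained, the paper's is shorter because it leans on existing systems theory.

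One point to be careful about, which is a subtlety inherited from the hypotheses rather than a defect peculiar to your argument: in the $v$-equation the source term $qu_k$ lies only in $C([0,T];L^2(\Omega))$, not $L^1(0,T;H^1(\Omega))$, because $q$ is merely $L^\infty(Q)$ (unlike $A,B,C$, it has no weak derivatives). So the clean claim ``the right-hand side has the $L^1_tH^1_x$-regularity needed to drive the $H^2$-level wave theory'' does not follow from Leibniz for the $q$-term, and you would need either to show that the $H^2$-level well-posedness survives with an $L^\infty$ zeroth-order coefficient (for instance by differentiating the equation in $t$ and using compatibility, or by the elliptic bootstrap $\Delta v=\partial_t^2v-F$), or to flag this as the same assumption the paper implicitly makes in its Corollary~\ref{Cor_regularity}. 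With that caveat noted, the proposal matches the paper's strategy and would yield \eqref{Energy_Est-1} by passing to the limit in the iteration bounds, exactly as you describe.
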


In the inverse problem, we prove unique determination of time-dependent coefficients $A$, $B$, $C$, and $q$ from the data observed on a portion of the boundary $\partial Q$. To formulate the problem, we begin by introducing the partial input-output operator. For a fixed $\omega_{0}\in\mathbb{S}^{n-1}$, 
we define $\omega_{0}$-shadowed and $\omega_{0}$-illuminated faces of $\partial\Omega$, respectively, as follows (see \cite{Bukhgeim-Uhlmann-2002})
\begin{equation*}
\partial\Omega_{+,\omega_{0}}:=\left\{x\in\partial\Omega:\ \nu(x)\cdot\omega_{0}\geq 0 \right\}\
\mbox { and }\,
\partial\Omega_{-,\omega_{0}}:=\left\{x\in\partial\Omega:\ \nu(x)\cdot\omega_{0}< 0 \right\},
\end{equation*}
where $\nu(x)$ is the outward unit normal vector to the boundary $\partial\Omega$ at $x$.
Corresponding to $\partial\Omega_{\pm}, \omega_{0}$ we denote portions of the lateral boundary of $Q$ as
\begin{equation}\label{lateral_bddy_part}
     \Sigma_{\pm,\omega_{0}}:=(0,T)\times\partial\Omega_{\pm, \omega_{0}}.
\end{equation}
Here we denote $G=(0,T)\times G^{\prime}$ where $G^{\prime}$ is an open neighborhood of $\partial\Omega_{-,\omega_{0}}$ in $\partial\Omega$. We define 
\[ 
\begin{aligned}
   \mathcal{E}(\Sigma, \Omega) := H^{4}(\Sigma) \times H^{2}(\Sigma) \times \prod_{j=0}^{3} H^{4-j}(\Omega)
\quad 
   \mbox{and} \quad \mathcal{K}(G, \Omega):=&(L^{2}(G))^{2}\times (L^{2}(\Omega))^{3},
\end{aligned}
\]
the input and output data spaces, respectively. Corresponding to the problem \eqref{IBVP}, we define the partial input-output operator
\[
\Nc_{A,B,C,q}: \mathcal{E}(\Sigma, \Omega)\to \mathcal{K}(G, \Omega), 
\]
\begin{equation}\label{Measurement}
\mathcal{F}=(f,g,\psi_{0}, \psi_{1},\psi_{2},\psi_{3})\mapsto \Nc_{A,B,C,q}(\mathcal{F})=(\partial_{\nu}u|_{G}, \partial_{\nu}(\Box u)|_{G}, u|_{t=T}, \partial_{t}u|_{t=T}, \partial_{t}^2u|_{t=T}).
\end{equation} 
In the next theorem, we state that the partial input-output operator $\mathcal{N}_{A, B, C, q}$ uniquely determines the time-dependent lower order perturbations $A$, $B$, $C$, and $q$.
\begin{theorem}\label{Main-Result}
Let $Q=(0, T)\times \Omega$, where $T>0$ and $\Omega\subset \R^n$, $n\geq 3$ be a bounded simply connected open set with smooth boundary. Let $A^{(i)}\in W^{3,\infty}(Q)$, $B^{(i)}, C^{(i)}\in W^{2,\infty}(Q)$ and $q^{(i)}\in L^{\infty}(Q)$, for $i=1,2$. Let $u_i$, $i=1,2$ be solutions to \eqref{IBVP} when $(A,B,C,q)=(A^{(i)},B^{(i)},C^{(i)},q^{(i)})$. If $\partial_{\nu}^k A^{(1)}=\partial_{\nu}^k A^{(2)}$, for $k=0,1$; $B^{(1)}=B^{(2)}$ and $C^{(1)}=C^{(2)}$ on $\Sigma$, then  
\begin{equation}\label{IO-operator}
\Nc_{A^{(1)},B^{(1)},C^{(1)},q^{(1)}}(\mathcal{F})= \Nc_{A^{(2)},B^{(2)},C^{(2)},q^{(2)}}(\mathcal{F}), \quad \text { for all } \mathcal{F}\in \mathcal{E}(\Sigma, \Omega),
\end{equation}
implies that $A^{(1)}=A^{(2)},\ B^{(1)}=B^{(2)}$, $C^{(1)}=C^{(2)}$ and $q^{(1)}=q^{(2)}$ in $Q$.  
\end{theorem}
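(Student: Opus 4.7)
Set $u := u_1 - u_2$ and write $\widetilde A := A^{(2)} - A^{(1)}$, $\widetilde B := B^{(2)} - B^{(1)}$, $\widetilde C := C^{(2)} - C^{(1)}$, $\widetilde q := q^{(2)} - q^{(1)}$. Then $u$ satisfies
\[
\mathscr{L}_{A^{(1)}, B^{(1)}, C^{(1)}, q^{(1)}} u \;=\; \widetilde A\,\Box u_2 + \widetilde B\,\partial_t u_2 + \widetilde C\cdot\nabla u_2 + \widetilde q\,u_2 \quad \text{in } Q ,
\]
with vanishing Cauchy data at $t=0$, and the hypothesis \eqref{IO-operator} forces $u = \Box u = 0$ on $\Sigma$, $\partial_\nu u = \partial_\nu \Box u = 0$ on $G$, and $u = \partial_t u = \partial_t^2 u = 0$ at $t=T$. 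Pairing against a function $v$ solving the formal adjoint $\mathscr{L}^{*}_{A^{(1)},B^{(1)},C^{(1)},q^{(1)}} v = 0$ on $Q$ (chosen with appropriate vanishing at $t=T$) and integrating by parts four times, the boundary hypotheses $\partial_\nu^k A^{(1)} = \partial_\nu^k A^{(2)}$ ($k=0,1$), $B^{(1)}=B^{(2)}$, $C^{(1)}=C^{(2)}$ on $\Sigma$ cancel every perturbation-generated boundary contribution and produce the key identity
\[
\int_Q \bigl(\widetilde A\,\Box u_2 + \widetilde B\,\partial_t u_2 + \widetilde C\cdot\nabla u_2 + \widetilde q\,u_2\bigr)\,v\,dt\,dx \;=\; \mathcal{B}(u,v)\big|_{\Sigma_{+,\omega_0}} ,
\]
where $\mathcal{B}(u,v)|_{\Sigma_{+,\omega_0}}$ is a bilinear form in the unmeasured traces $\partial_\nu u$, $\partial_\nu \Box u$ on $\Sigma_{+,\omega_0}$ and in $v$, $\Box v$, $\partial_\nu v$, $\partial_\nu \Box v$ there. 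Thus all the information that is not supplied by the data is concentrated on the unobserved face.

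\textbf{Geometric-optics solutions and Carleman control.} For $\omega \in \Sn$ close to $\omega_0$ the null phase $\phi_\omega(t,x) := t + x\cdot\omega$ satisfies $\Box \phi_\omega = 0$ and $|\nabla_x \phi_\omega|^2 = (\partial_t\phi_\omega)^2$, and I look for GO solutions
\[
u_2 = e^{i\sigma\phi_\omega}\Bigl(a_0 + \sigma^{-1}a_1 + \sigma^{-2}a_2 + \sigma^{-3}a_3\Bigr) + R_\sigma , \qquad v = e^{-i\sigma\phi_\omega}\Bigl(b_0 + \sigma^{-1}b_1 + \sigma^{-2}b_2 + \sigma^{-3}b_3\Bigr) + S_\sigma .
\]
Because $\Box^2 = \Box\circ\Box$, the leading transport operator acting on $a_0, b_0$ is $L_\omega^2$ with $L_\omega := \partial_t - \omega\cdot\nabla_x$, and the amplitudes are produced by solving a hierarchy of two iterated transport equations along null rays with sources involving $\Box$, $A^{(i)}$, $B^{(i)}$, $C^{(i)}$, and $q^{(i)}$. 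I prescribe the Cauchy amplitudes on $\{t=0\}$ (resp.\ $\{t=T\}$) with support inside a thin tubular neighbourhood of a null line entering through the measured face $\partial\Omega_{-,\omega_0}$, ensuring that the amplitudes vanish on $\Sigma_{+,\omega_0}$. The remainders $R_\sigma, S_\sigma$ are produced by solving forward/backward inhomogeneous bi-wave IBVPs via Theorem \ref{IBVP-Biwave-wellposedness}, with $\|R_\sigma\|_{H^2(Q)} + \|S_\sigma\|_{H^2(Q)} = O(\sigma^{-N})$ for any prescribed $N$. To control the residual boundary term $\mathcal{B}(u,v)|_{\Sigma_{+,\omega_0}}$ I derive a partial-data Carleman estimate for the perturbed bi-wave with linear weight $\varphi(x) := x\cdot\omega_0$, of the schematic form
\[
\tau^3 \|e^{\tau\varphi}u\|_{L^2(Q)}^2 + \sum_{k=0,1}\tau\,\|e^{\tau\varphi}\partial_\nu\Box^k u\|_{L^2(\Sigma_{-,\omega_0})}^2 \;\le\; C\,\|e^{\tau\varphi}\mathscr{L}_{A,B,C,q} u\|_{L^2(Q)}^2 ,
\]
the boundary contributions on $\Sigma_{+,\omega_0}$ carrying a favourable sign so that they can be absorbed into the left-hand side after multiplication by $e^{\tau\varphi}$.

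\textbf{Inversion of the light-ray transform.} Combining the Carleman bound with the support properties of the GO amplitudes collapses the boundary piece to zero and reduces the identity to a purely interior one. Letting $\sigma \to \infty$ and reading off the coefficients of $\sigma^0, \sigma^{-1}, \sigma^{-2}, \sigma^{-3}$ yields, for every admissible $\omega$ and every admissible choice of Cauchy amplitudes, integral identities of the schematic form $\int_\R M_k\bigl(t, x_0 + t\omega\bigr)\,dt = 0$ $(k=0,1,2,3)$, where $M_0$ depends only on $\widetilde A$, $M_1$ couples $\widetilde A$ with the $1$-form $(\widetilde B, \widetilde C)\cdot (1,\omega)$, $M_2$ further involves $\widetilde q$, and so on. These are light-ray transforms along the null lines $(1,\omega)$; varying $\omega$ over an open cone around $\omega_0$ and invoking the injectivity of the light-ray transform on compactly supported functions and $1$-forms on $Q$ (multiple directions removing the standard exact-form gauge for the space-time $1$-form $(\widetilde B,\widetilde C)$), I recover in sequence $\widetilde A = 0$, then $\widetilde B = \widetilde C = 0$, and finally $\widetilde q = 0$. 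The main technical obstacle, in my view, is the combined construction of (i) a four-term GO parametrix for the fourth-order operator $\Box^2$, which requires a double transport inversion through $L_\omega^2$ and careful amplitude localisation near the unobserved face, and (ii) a partial-data Carleman estimate controlling simultaneously both traces $\partial_\nu u$ and $\partial_\nu \Box u$; the additional integrations by parts generated by $\Box^2$ produce considerably more boundary terms than for the ordinary wave operator, and arranging their signs on $\Sigma_{+,\omega_0}$ is the delicate step on which the whole argument rests.
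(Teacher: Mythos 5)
Your overall architecture --- integral identity, GO solutions, boundary Carleman estimate, light-ray-transform inversion --- matches the paper's, but two of the key steps as written would not close. The first is a structural mismatch between your oscillatory GO solutions $e^{\pm i\sigma\phi_\omega}$ and a Carleman estimate with a real weight $e^{\tau\varphi}$. The paper builds \emph{real-exponential} GO solutions $e^{\pm\varphi/h}(a_0+ha_1+r)$ with the null weight $\varphi(t,x)=t+x\cdot\omega$ and proves the boundary Carleman estimate with that \emph{same} weight; the point is that the unmeasured boundary terms then carry two matched factors of $e^{-\varphi/h}$ (one inserted by the Carleman weight on $\partial_\nu\Box^k u$, one coming from the decaying adjoint solution $v$), which is exactly what \eqref{Boundary-Carleman-estimate} controls by $\lVert e^{-\varphi/h}h^4\mathscr{L}u\rVert_{L^2}^2$, a small quantity because $\mathscr{L}u=(\widetilde A\Box+\cdots)u_2$ and $u_2$ has the conjugate weight. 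With an oscillatory phase, $\bar v$ is only bounded, not exponentially small, so the compensating decay is absent; your spacelike weight $\varphi(x)=x\cdot\omega_0$ does not supply it, and your stated estimate moreover puts the controlled traces on $\Sigma_{-,\omega_0}$ --- the face where the data is already prescribed --- rather than on $\Sigma_{+,\omega_0}$ where you need them. Localising the amplitudes in a tube about a null ray entering through $\partial\Omega_{-,\omega_0}$ also cannot make them vanish on $\Sigma_{+,\omega_0}$: such a ray generically exits through $\partial\Omega_{+,\omega_0}$ unless $T$ is small relative to $\operatorname{diam}\Omega$, whereas the theorem must hold for every $T>0$.

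The second gap is in the recovery sequence. After the boundary terms are eliminated, the leading surviving order (since $\Box\phi_\omega=0$) contains both $2\widetilde A\,(L_\omega a_0)\,\bar b_0$ and $(\widetilde B+\widetilde C\cdot\omega)\,a_0\bar b_0$ at the \emph{same} power of the large parameter, so $M_0$ cannot depend on $\widetilde A$ alone; your intended order ($\widetilde A$ first, then $\widetilde B,\widetilde C$) is not available without first decoupling them. The paper exploits that the eikonal constraint for $\Box^2$ is the \emph{iterated} transport $L_\omega^2 a_0=0$, not $L_\omega a_0=0$: it first chooses $a_0=e^{-i\xi\cdot(t,x)}$ with $\xi\perp(1,-\omega)$ so that $L_\omega a_0=0$, annihilating the $\widetilde A$ term and yielding the light-ray transform of the $1$-form $(\widetilde B,-\widetilde C)$; only afterward does it take $a_0=(1,-\omega)\cdot(t,x)\,e^{-i\xi\cdot(t,x)}$, which has $L_\omega a_0\neq 0$, to isolate $\widetilde A$. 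Finally, the claim that ``multiple directions remove the standard exact-form gauge'' for $(\widetilde B,\widetilde C)$ is not correct: the light-ray transform of a compactly supported $1$-form over an open cone of directions determines it only up to $\nabla_{t,x}\Phi$ with $\Phi|_{\partial Q}=0$. The paper eliminates this residual freedom by a second use of the identity --- substituting $(\widetilde B,-\widetilde C)=\nabla_{t,x}\Phi$, taking the affine adjoint amplitude $b_{d_0}(t,x)=(1,-\omega)\cdot(t,x)$, integrating by parts, and invoking Paley--Wiener to force $\Phi\equiv 0$. Without an analogous extra step the gauge ambiguity survives in your argument.
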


The study of inverse problems for hyperbolic partial differential equations (PDEs) has attracted significant attention in recent years. Partly due to the wide range of applications and partly because of its rich theoretical literature. For the direct problem, we use the method of Galerkin approximation, as developed in \cite{Lions_Magenese_Book,Lassas_Book} modified suitably for a system of hyperbolic operators. Some of the ideas here follow from \cite{RM-Matrix-potential}. The proof of the inverse problem relies on obtaining appropriate integral identities for the differences of the perturbation coefficients using the input-output operators. The integral identities we obtain, are known as the Light Ray Transforms (LRTs), which have been proved to be injective in \cite{Stefanov_1989,Plamen_Yang_2018,Siamak_2018,Ali_Joonas_lauri_LRT_2021,Krishnan_Senapati_Vashisth_2020}. To obtain the LRTs, we construct a sufficiently large class of solutions, known as the Geometric Optics (GO) solutions. In particular, we construct growing and decaying type GO solutions for the operator $\mathcal{L}_{A,B,C,q}$ and its adjoint. Motivated by the existing literature on the hyperbolic inverse problems,  we construct the GO solutions using an interior Carleman weight. Furthermore, we prove a boundary Carleman estimate to control solutions on the part of the boundary where the output data is not measured. To the best of our knowledge, this is the first result concerning the coefficient determination problems for higher-order wave operators available in the literature.

The pioneering works of Sylvester and Uhlmann~\cite{Global_uniqueness}, and Rakesh and Symes~\cite{Rakesh-Symes98} motivated a lot of researchers to investigate inverse problems for recovering unknown perturbations of PDEs. In \cite{Isakov-damping91}, the unique recovery of the time-independent damping coefficient and potential was achieved using the same boundary data. Afterward, extensive progress has been made in the literature on recovering time-dependent coefficients from boundary measurements of solutions to initial boundary value problem for second-order hyperbolic operators; see, for instance, \cite{Ramm-Sjostrand91,Isakov_1991,Eskin2007,Kian_Damping_2016,Kian-17,Kian_Lauri_2019,K.Vashisth,Ali-Joonas-kian-Lauri-21,Real_Principal_Op,Liu_Saksala_Lili} and the references therein. There have also been extensive results concerning stability issues, primarily based on the construction of GO solutions; we refer the reader to see \cite{Stefanov_UHl_2005,Bell_Jell_Yama_2006,Bellassoued_Dos_2011,Ibtissem_Stability_2015,Kian-Stability-potential,Soumen_Stability} and the references therein.

The rest of the article is organized as follows. In Section~\ref{Direct-Problem}, we discuss the well-posedness of the ~\eqref{IBVP}. Section \ref{GO-Soln}, we construct special classes of GO solutions to $\mathscr{L}_{A, B, C,q}u=0$ based on the Carleman estimate. Finally, Section~\ref{Proof_main_thm} gives the proof of Theorem~\ref{Main-Result}.

\section{The Direct Problem}\label{Direct-Problem}
In this section, we discuss the existence and uniqueness of \eqref{IBVP}. We start by introducing some of the notations that will be useful throughout the section. For $s \in \mathbb{R}$, we denote $\mathbf{H}^s(X)$ as the space of vector-valued functions defined on $X$ with components in the Sobolev space $H^s(X)$. A similar notation will also be used for other vector-valued function spaces, such as $\mathbf{L}^2(X)$ and $\mathbf{C}^k(X)$, where $X = \Omega$ or $\partial\Omega$. For $\vec{u} = \left(u_1, u_2\right)^T \in \mathbf{H}^1(Q)$, we define
\[
 \lVert \vec{u} \rVert_{\mathbf{L}^2(Q)}^2 := \sum_{j=1}^{2} \int_Q |u_j(t, x)|^2 \, dx \, dt \quad 
	\mbox{and} \quad \nabla_x \vec{u} := \left( \nabla_x u_1, \nabla_x u_2 \right)^T.
\]
We prove Theorem \ref{IBVP-Biwave-wellposedness} by converting \eqref{IBVP} to a system of wave equations. By a suitable modification of the existing well-posedness results for systems of wave operators \cite{Lions_Magenese_Book,RM-Matrix-potential}, we finally conclude the proof.
Let us consider the system of wave equations
\begin{equation}\label{IBVP for first order vector}
\begin{aligned}
\begin{cases}
\left(\Box +\mathbf{P}_{\mathbf{B},\mathbf{C},\mathbf{q}}\right)\vec{u} := \left(\Box + \mathbf{B}(t,x,\partial_{t}) + \mathbf{C}(t,x,\nabla_{x}) + \mathbf{q}(t,x)\right)\vec{u} = \vec{0} & \text{ in } Q,\\
\vec{u}(0,x) = \vec{\Phi}(x), \quad \quad \partial_{t}\vec{u}(0,x) = \vec{\Psi}(x) & \text{ in } \Omega,\\
\vec{u}(t,x) = \vec{f}(t,x) & \text{ on } \Sigma,
	 \end{cases}
	 \end{aligned} 
  \end{equation}
\[\begin{gathered}
\mbox{where} \quad \vec{u}(t,x) = \begin{pmatrix}
    u_1(t,x)\\
    u_2(t,x)
\end{pmatrix}, \quad \vec{f}(t,x) = \begin{pmatrix}
    f_1(t,x)\\
    f_2(t,x)
\end{pmatrix},\\
\vec{\Phi}(x) = \begin{pmatrix}
    \Phi_1(x)\\
    \Phi_2(x)
\end{pmatrix},\quad 
\vec{\Psi}(x) = \begin{pmatrix}
    \Psi_1(x)\\
    \Psi_2(x)
\end{pmatrix},\quad 
\mathbf{q}(t,x) := \begin{bmatrix}
q_{11}(t,x) & q_{12}(t,x) \\
q_{21}(t,x) & q_{22}(t,x)
\end{bmatrix},\\ 
\mathbf{B}(t,x,\partial_{t}) := 
\begin{bmatrix}
    b_{11}(t,x)\partial_{t} & b_{12}(t,x)\partial_{t} \\
    b_{21}(t,x)\partial_{t} & b_{22}(t,x)\partial_{t} 
\end{bmatrix},
\quad 
\mathbf{C}(t,x,\nabla_{x}) := 
\begin{bmatrix}
    c_{11}(t,x)\cdot \nabla_{x} & c_{12}(t,x)\cdot \nabla_{x} \\
    c_{21}(t,x)\cdot \nabla_{x} & c_{22}(t,x)\cdot \nabla_{x}
\end{bmatrix},
\end{gathered}
\]
with $ b_{ij}(t,x) \in W^{2,\infty}(Q; \mathbb{R}) $, $ c_{ij}(t,x) \in W^{2,\infty}(Q; \mathbb{R}^n) $, and $ q_{ij}(t,x) \in L^{\infty}(Q; \mathbb{R})$, $1 \leq i,j \leq 2 $. 
\begin{proposition}\label{General-Wellposedness-system}
    Let $T>0$ be given and $\vec{\Phi}\in \mathbf{H}^{1}(\Omega), \vec{\Psi}\in \mathbf{L}^{2}(\Omega)$ and $\vec{f}\in \mathbf{H}^{1}(\Sigma)$ be such that the compatibility condition  $\vec{f}|_{t=0}=\vec{\Phi}|_{\partial\Omega}$ is valid. 
    Then there exists a unique solution $\vec{u}(t)$ to \eqref{IBVP for first order vector} such that
    \[
    \vec{u}(t)\in \mathbf{C}^{1}\lb [0,T];\mathbf{L}^{2}(\Omega)\rb\cap \mathbf{C}\lb [0,T];\mathbf{H}^{1}(\Omega)\rb, \quad \partial_{\nu}\vec{u}\in \mathbf{L}^{2}{(\Sigma)},
    \]
    and there exists a constant $C>0$ such that
\begin{equation}\label{Estimate for solution}
\begin{aligned}
\max_{0\leq t\leq T}\left(\lVert\vec{u}(t)\rVert_{\mathbf{H}^{1}(\Omega)}+\|\partial_{t}\vec{u}(t)\|_{\mathbf{L}^{2}(\Omega)}\right)+\lVert\partial_{\nu}\vec{u}\rVert_{\mathbf{L}^2(\Sigma)} \leq C \lb\lVert\vec{\Phi}\rVert_{\mathbf{H}^{1}(\Omega)}+ \lVert\vec{\Psi}\rVert_{\mathbf{L}^{2}(\Omega)}+\lVert\vec{f}\rVert_{\mathbf{H}^{1}(\Sigma)}\rb.
 \end{aligned}
	\end{equation}
\end{proposition}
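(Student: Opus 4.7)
\textbf{Proof Plan for Proposition \ref{General-Wellposedness-system}.}
The plan is to recast \eqref{IBVP for first order vector} as a hyperbolic system with homogeneous Dirichlet boundary data by lifting, then to build a solution via a Galerkin scheme adapted to the vector structure, and finally to recover the boundary regularity $\partial_{\nu}\vec u\in\mathbf{L}^2(\Sigma)$ via a multiplier argument in the spirit of Lions. First, using standard trace/extension theorems, I would select $\vec F\in \mathbf{H}^{1}(Q)$ with $\vec F|_\Sigma = \vec f$ and $\|\vec F\|_{\mathbf{H}^1(Q)} \lesssim \|\vec f\|_{\mathbf{H}^1(\Sigma)}$, and write $\vec v = \vec u - \vec F$. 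Then $\vec v$ solves a system of the same type but with zero Dirichlet trace, modified initial data $\vec\Phi - \vec F(0,\cdot)$, $\vec\Psi - \partial_t \vec F(0,\cdot)$ (the compatibility condition guarantees the shifted Dirichlet datum of $\vec v$ vanishes on $\{t=0\}\times \partial\Omega$), and a source term $\vec S = -(\Box+\mathbf P_{\mathbf B,\mathbf C,\mathbf q})\vec F$ which, in the distributional sense, is controlled by $\|\vec f\|_{\mathbf H^1(\Sigma)}$ in a suitable dual space.

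Next, for this reduced problem, I would apply Galerkin approximation: let $\{e_k\}_{k\ge 1}$ be the $L^2(\Omega)$-orthonormal basis of Dirichlet Laplacian eigenfunctions and seek $\vec v_m(t,x) = \sum_{k=1}^m \vec\alpha_k^m(t)\,e_k(x)\in \mathbb{R}^2\otimes \mathrm{span}\{e_1,\dots,e_m\}$ solving the projected ODE system, with initial data obtained by projecting the modified $\vec \Phi,\vec\Psi$. Testing the equation against $\partial_t \vec v_m$ and integrating over $\Omega$ would produce the key energy identity; the symmetric principal part generates $\tfrac12 \tfrac{d}{dt}(\|\partial_t\vec v_m\|^2 + \|\nabla_x \vec v_m\|^2)$, while the matrix lower-order terms $\mathbf B\partial_t \vec v_m$, $\mathbf C\cdot\nabla_x \vec v_m$, $\mathbf q\vec v_m$ are estimated by $L^\infty$ bounds on the coefficients together with Cauchy--Schwarz and Young, so that a Grönwall argument yields a bound on $\|\vec v_m\|_{\mathbf H^1(\Omega)}+\|\partial_t \vec v_m\|_{\mathbf L^2(\Omega)}$ uniform in $m$, in terms of the norms on the right of \eqref{Estimate for solution}. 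Standard weak-* compactness arguments then give a limit $\vec v$ lying in the required class, and a density/limit passage in the weak formulation shows $\vec v$ solves the reduced problem; uniqueness follows from applying the same energy estimate to the difference of two solutions (which has zero data), again via Grönwall.

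The main obstacle, and the step I would treat most carefully, is proving the hidden regularity $\partial_{\nu}\vec u\in \mathbf L^{2}(\Sigma)$ with the quantitative bound. For this I would use the multiplier method: choose $h\in C^{1}(\overline{\Omega};\mathbb{R}^{n})$ with $h|_{\partial\Omega}=\nu$, and test the regularised equation (at the Galerkin level) against $h\cdot\nabla_x \vec v_m$ componentwise. Integration by parts over $Q$ produces, from $\Box \vec v_m$, the boundary contribution
\[
\tfrac12\int_{\Sigma}|\partial_{\nu}\vec v_m|^{2}\,dS\,dt,
\]
together with interior terms that are controlled by the energy already bounded above, and lower-order terms from $\mathbf B,\mathbf C,\mathbf q$ which are absorbed similarly. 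Passing $m\to\infty$ and reinstating $\vec u = \vec v + \vec F$, the lift contributes $\partial_\nu \vec F|_\Sigma$ bounded by $\|\vec f\|_{\mathbf H^1(\Sigma)}$ (after suitably choosing the extension). Combining everything yields \eqref{Estimate for solution}. The delicate point in this last step is that $\vec{v}_m$ does not itself vanish on $\Sigma$ exactly at the Galerkin level unless the basis respects the Dirichlet condition; the choice of Dirichlet eigenfunctions above is precisely what ensures the boundary term reduces to the clean form of $|\partial_\nu \vec v_m|^2$.
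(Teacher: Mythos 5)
Your plan takes a genuinely different route from the paper. The paper decomposes $\vec{u}=\vec{v}+\vec{w}$, where $\vec{v}$ solves the \emph{free} wave system $\Box\vec{v}=0$ with the full boundary/initial data (citing \cite{Lassas_Book} for well-posedness and the hidden regularity $\partial_\nu\vec v\in\mathbf L^2(\Sigma)$), and $\vec{w}$ solves the perturbed system with homogeneous boundary/initial data and source $-\mathbf P_{\mathbf B,\mathbf C,\mathbf q}\vec v$, which lies in $\mathbf L^2(Q)$ precisely because $\vec v\in C([0,T];\mathbf H^1)\cap C^1([0,T];\mathbf L^2)$. The existence and regularity of $\vec w$, including $\partial_\nu\vec w\in\mathbf L^2(\Sigma)$, then come from the standard Lions--Magenes theory plus a multiplier argument cited from \cite{RM-Matrix-potential}. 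The whole point of this decomposition is that the hard part --- nonhomogeneous Dirichlet data with sharp regularity --- is pushed onto the free equation, where a known theorem applies.

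Your plan instead lifts $\vec f$ to an $\mathbf H^1(Q)$ function $\vec F$ and tries to run Galerkin and the multiplier method for $\vec v=\vec u-\vec F$ from scratch. There is a concrete gap here. With $\vec F\in\mathbf H^1(Q)$, the source $\vec S=-(\Box+\mathbf P)\vec F$ contains $\Box\vec F\in\mathbf H^{-1}(Q)$. In the energy step you test against $\partial_t\vec v_m$, which is controlled only in $\mathbf L^2(\Omega)$, so the pairing $\langle\Box\vec F,\partial_t\vec v_m\rangle$ cannot be estimated by the quantities you are bounding; the estimate does not close. This is not a cosmetic issue: sharp well-posedness of the wave equation with $H^1(\Sigma)$ Dirichlet data and $L^2$ normal trace (Lasiecka--Lions--Triggiani-type regularity) is exactly what a naive lift-plus-Galerkin argument fails to produce, which is why both the paper and the cited references treat it via the free equation first. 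A smoother lift, say $\vec F\in\mathbf H^{3/2}(Q)$ from trace theory, still only gives $\Box\vec F\in \mathbf H^{-1/2}(Q)$. Second, your final step reinstates $\partial_\nu\vec F|_\Sigma$ and asserts it is bounded by $\|\vec f\|_{\mathbf H^1(\Sigma)}$; but for $\vec F\in\mathbf H^1(Q)$ the normal trace is not defined in $\mathbf L^2(\Sigma)$, and even for smoother generic extensions it is not controlled solely by the Dirichlet trace norm. The only extension that fixes both problems simultaneously is one for which $\Box\vec F$ is already in $\mathbf L^2(Q)$ and $\partial_\nu\vec F\in \mathbf L^2(\Sigma)$ with the right bound --- i.e.\ the solution of the free wave equation with data $(\vec\Phi,\vec\Psi,\vec f)$ --- at which point your argument collapses into the paper's decomposition while assuming the hard nonhomogeneous well-posedness result that you were trying to avoid. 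Your Galerkin and multiplier steps for the homogeneous-Dirichlet, $\mathbf L^2$-source problem are fine; the gap is entirely in how the boundary data are removed.
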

\begin{proof}
We express the solution $\vec{u}$ as a sum of a homogeneous and a particular solution as $\vec{u}(t,x)=\vec{v}(t,x)+\vec{w}(t,x)$, where $\vec{v}$ and $\vec{w}$ solves
\begin{equation}\label{Equation for v}
	\begin{aligned}
		\begin{cases}
		\Box\vec{v}(t,x)=\vec{0}  \quad &\text { in } Q,\\
		\vec{v}(0,x)=\vec{\Phi}(x),\quad \PD_{t}\vec{v}(0,x)=\vec{\Psi}(x)  &\text { in }\Omega,\\
		\vec{v}(t,x)=\vec{f}(t,x)  \quad &\text { on } \Sigma,
		\end{cases}
		\end{aligned}
		\end{equation}
and
\begin{equation}\label{Equation for w}
\begin{aligned}
		\begin{cases}
\left(\Box + \mathbf{P}_{\mathbf{B},\mathbf{C},\mathbf{q}}\right)\vec{w} = - \mathbf{P}_{\mathbf{B},\mathbf{C},\mathbf{q}}\vec{v}   \quad  \text{ in } Q,\\
		\vec{w}(0,x)=\vec{0}, \quad \partial_t\vec{w}(0,x)=\vec{0} \quad \text { in }\Omega,\\
		\vec{w}(t,x)=\vec{0}  \quad\text { on } \Sigma.
		\end{cases}
		\end{aligned}
\end{equation}
Using the fact that $\vec{\Phi}\in \mathbf{H}^{1}(\Omega)$,  $\vec{\Psi}\in \mathbf{L}^{2}(\Omega)$, and $\vec{f}\in \mathbf{H}^{1}(\Sigma)$ from \cite[Theorem 2.30]{Lassas_Book} there exists a unique solution $\vec{v}(t,x)$ to \eqref{Equation for v} such that 
\begin{equation}\label{eq-1}
\begin{gathered}
\vec{v}(t)\in \mathbf{C}^{1}\lb [0, T];\mathbf{L}^{2}(\Omega)\rb\cap \mathbf{C}\lb [0,T];\mathbf{H}^{1}(\Omega)\rb, \quad \mbox{with}\quad \partial_{\nu}\vec{v}\in \mathbf{L}^{2}(\Sigma)\\
\mbox{and} \quad
\max_{0\leq t\leq T}\left(\sum_{k=0,1} \lVert\partial_t^k \vec{v}(t)\rVert_{\mathbf{H}^{1-k}(\Omega)}\right)+\lVert\partial_{\nu}\vec{v}\rVert_{\mathbf{L}^2(\Sigma)}\leq C \lb\lVert\vec{\Phi}\rVert_{\mathbf{H}^{1}(\Omega)}+ \lVert\vec{\Psi}\rVert_{\mathbf{L}^{2}(\Omega)}+\lVert\vec{f}\rVert_{\mathbf{H}^{1}(\Sigma)}\rb,
\end{gathered}
\end{equation}
where constant $C>0$ is independent of $\vec{v}$.
Furthermore, since $\mathbf{P}_{\mathbf{B},\mathbf{C},\mathbf{q}} \vec{v}\in \mathbf{L}^2(Q)$, then from \cite[Theorem 8.1, Chapter 3]{Lions_Magenese_Book} \eqref{Equation for w} admits a unique solution
\begin{equation}\label{eq-2}
\begin{gathered}
\vec{w}(t)\in \mathbf{C}^{1}\lb [0,T];\mathbf{L}^{2}(\Omega)\rb \cap \mathbf{C}\lb [0,T];\mathbf{H}^{1}(\Omega)\rb\\
\mbox{such that} \qquad 
\max_{0\leq t\leq T}\left(\lVert\vec{w}(t)\rVert_{\mathbf{H}^{1}(\Omega)}+\|\partial_{t}\vec{w}(t)\|_{\mathbf{L}^{2}(\Omega)}\right)\leq C\lb\lVert\vec{\Phi}\rVert_{\mathbf{H}^{1}(\Omega)}+ \lVert\vec{\Psi}\rVert_{\mathbf{L}^{2}(\Omega)}+\lVert\vec{f}\rVert_{\mathbf{L}^{2}(\Sigma)}\rb,
\end{gathered}
\end{equation}
where $C>0$ is independent of $\vec{w}$. Then, we apply arguments similar to \cite[Theorem 2.1]{RM-Matrix-potential} to obtain $\partial_{\nu}\vec{w}\in\mathbf{L}^2(\Sigma)$ with 
\begin{equation*}
\begin{aligned}
\lVert \partial_{\nu}\vec{w}\rVert_{\mathbf{L}^2(\Sigma)} \leq C\lb\lVert\vec{\Phi}\rVert_{\mathbf{H}^{1}(\Omega)}+ \lVert\vec{\Psi}\rVert_{\mathbf{L}^{2}(\Omega)}+\lVert\vec{f}\rVert_{\mathbf{H}^{1}(\Sigma)}\rb.
 \end{aligned}
\end{equation*}
We complete the proof by combining this with \eqref{eq-1} and \eqref{eq-2}.
\end{proof}

 \begin{corollary}\label{Cor_regularity}
Furthermore, to obtain the higher-order regularity for $\vec{u}$, we use the argument from \cite[Theorem 2.45]{Lassas_Book}. If $\vec{\Phi}\in \mathbf{H}^2(\O)$, $\vec{\Psi}\in \mathbf{H}^{1}(\O)$ and $\vec{f}\in \mathbf{H}^2(\Sigma)$ satisfies the compatibility conditions $\vec{f}|_{t=0}=\vec{\Phi}|_{\partial\O}$ and $\partial_{t}\vec{f}|_{t=0}=\partial_{t}\vec{u}|_{(\partial\O \times \{0 \})}=\vec{\Psi}|_{\partial\O}$. Then there exists a unique solution $\vec{u}(t)$ to \eqref{IBVP for first order vector} such that
 \[
 \vec{u}(t)\in \mathbf{C}\lb [0,T];\mathbf{H}^{2}(\Omega)\rb\cap \mathbf{C}^1\lb [0,T];\mathbf{H}^{1}(\Omega)\rb\cap \mathbf{C}^2\lb [0,T];\mathbf{L}^{2}(\Omega)\rb \quad \mbox { with } \quad \partial_{\nu}\vec{u}\in \bm{H}^{1}{(\Sigma)},
 \]
 and there exists a constant $C>0$ such that
\begin{equation}\label{Improved_Est_soln}
\begin{aligned}
\max_{0\leq t\leq T}\left(\sum_{k=0,1,2} \lVert\partial_t^k \vec{u}(t)\rVert_{\mathbf{H}^{2-k}(\Omega)}\right)
+\lVert\partial_{\nu}\vec{u}\rVert_{\mathbf{H}^1(\Sigma)} \leq C \lb\lVert\vec{\Phi}\rVert_{\mathbf{H}^{2}(\Omega)}+ \lVert\vec{\Psi}\rVert_{\mathbf{H}^{1}(\Omega)}+\lVert\vec{f}\rVert_{\mathbf{H}^{2}(\Sigma)}\rb.
 \end{aligned}
	\end{equation}
 \end{corollary}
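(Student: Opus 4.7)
My plan follows the standard bootstrap of \cite[Theorem 2.45]{Lassas_Book}, now applied componentwise to the $2\times 2$ system \eqref{IBVP for first order vector}: differentiate once in $t$ to gain one degree of temporal regularity on $\vec{u}$, then view the PDE on each time slice as an elliptic equation to gain one spatial derivative. Setting $\vec{U}:=\partial_t\vec{u}$, formal differentiation of \eqref{IBVP for first order vector} gives
\[
\Box\vec{U} + \mathbf{P}_{\mathbf{B},\mathbf{C},\mathbf{q}}\vec{U} = -[\partial_t,\mathbf{P}_{\mathbf{B},\mathbf{C},\mathbf{q}}]\vec{u} \quad \text{in } Q,
\]
with initial data $\vec{U}(0,\cdot)=\vec{\Psi}$ and $\partial_t\vec{U}(0,\cdot)=(\Delta_x\vec{u}-\mathbf{B}\partial_t\vec{u}-\mathbf{C}\cdot\nabla_x\vec{u}-\mathbf{q}\vec{u})|_{t=0}$ (read off from the equation at $t=0$ using $\vec{\Phi}\in\mathbf{H}^2(\Omega)$ and $\vec{\Psi}\in\mathbf{H}^1(\Omega)$), and boundary data $\vec{U}|_\Sigma=\partial_t\vec{f}\in\mathbf{H}^1(\Sigma)$. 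These sit in $\mathbf{H}^1(\Omega)\times\mathbf{L}^2(\Omega)\times\mathbf{H}^1(\Sigma)$ respectively, and the required compatibility $\partial_t\vec{f}|_{t=0}=\vec{\Psi}|_{\partial\Omega}$ is given by hypothesis. Since $\mathbf{B},\mathbf{C}\in W^{2,\infty}(Q)$, the commutator source is controlled in $\mathbf{L}^2(Q)$ by \eqref{Estimate for solution}, so an inhomogeneous analogue of Proposition \ref{General-Wellposedness-system} (cf.\ \cite[Ch.~3, Thm.~8.1]{Lions_Magenese_Book}) applies and yields
\[
\vec{U}\in\mathbf{C}([0,T];\mathbf{H}^1(\Omega))\cap\mathbf{C}^1([0,T];\mathbf{L}^2(\Omega)), \qquad \partial_\nu\vec{U}\in\mathbf{L}^2(\Sigma).
\]

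With $\partial_t\vec{u}\in\mathbf{C}([0,T];\mathbf{H}^1(\Omega))$ and $\partial_t^2\vec{u}\in\mathbf{C}([0,T];\mathbf{L}^2(\Omega))$ in hand, I rewrite \eqref{IBVP for first order vector} as the elliptic problem
\[
-\Delta_x\vec{u}(t,\cdot) = -\partial_t^2\vec{u}(t,\cdot)-\mathbf{B}\partial_t\vec{u}-\mathbf{C}\cdot\nabla_x\vec{u}-\mathbf{q}\vec{u}, \qquad \vec{u}|_{\partial\Omega}=\vec{f}(t,\cdot)
\]
on each time slice. The right-hand side lies in $\mathbf{L}^2(\Omega)$ continuously in $t$, while $\vec{f}(t,\cdot)\in\mathbf{H}^{3/2}(\partial\Omega)$ continuously in $t$ by standard trace theory on $\Sigma$. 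Classical elliptic regularity on $\Omega$ then promotes $\vec{u}$ to $\mathbf{C}([0,T];\mathbf{H}^2(\Omega))$ with the desired bound \eqref{Improved_Est_soln}. For the improved normal-derivative trace $\partial_\nu\vec{u}\in\mathbf{H}^1(\Sigma)$, I combine the spatial trace $\partial_\nu\vec{u}(t,\cdot)\in\mathbf{H}^{1/2}(\partial\Omega)$ coming from the interior $\mathbf{H}^2$-regularity with $\partial_t\partial_\nu\vec{u}=\partial_\nu\vec{U}\in\mathbf{L}^2(\Sigma)$ from the previous step, arguing exactly as in \cite[Theorem 2.1]{RM-Matrix-potential}.

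The main technical hurdle I anticipate is the rigorous justification of the time differentiation, since the potential matrix $\mathbf{q}$ is only in $L^\infty(Q)$ and the formal commutator $[\partial_t,\mathbf{q}]$ is not literally defined. The standard workaround is to move the $\mathbf{q}\vec{u}$ contribution into the inhomogeneous source of the $\vec{U}$-equation and treat it via Duhamel's principle, which requires only boundedness of $\mathbf{q}$ together with the already-established $\mathbf{H}^1$-regularity of $\vec{u}$. The remaining ingredients (inhomogeneous energy estimate, classical elliptic regularity, and trace theorems) apply componentwise and complete the proof mechanically.
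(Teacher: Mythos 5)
Your overall bootstrap — differentiate in $t$ to get a system for $\vec{U}=\partial_t\vec{u}$ with lower‐order data, apply the inhomogeneous analogue of Proposition~\ref{General-Wellposedness-system}, then use elliptic regularity on time slices to promote $\vec{u}$ to $\mathbf{C}([0,T];\mathbf{H}^2)$ — is exactly the argument the paper is invoking when it cites \cite[Theorem 2.45]{Lassas_Book}, and the compatibility checks and reduction to \cite[Theorem 2.1]{RM-Matrix-potential} for the normal trace are the same as what the paper would use. So there is no disagreement in route; the question is whether you have closed the one step you flag yourself.

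You have not. The "Duhamel workaround" you propose for the commutator term $\partial_t(\mathbf{q}\vec{u})$ does not give what you need. The source $\mathbf{q}\vec{u}$ lies only in $\mathbf{C}([0,T];\mathbf{L}^2(\Omega))$ when $\mathbf{q}\in \mathbf{L}^\infty(Q)$, and the Duhamel integral $\vec{w}(t)=-\int_0^t \mathcal{S}(t,s)(\mathbf{q}\vec{u})(s)\,\D s$ for a source merely in $L^1(0,T;L^2)$ produces $\vec{w}\in \mathbf{C}([0,T];\mathbf{H}^1)\cap \mathbf{C}^1([0,T];\mathbf{L}^2)$ and nothing more: to push to $\mathbf{C}([0,T];\mathbf{H}^2)\cap \mathbf{C}^2([0,T];\mathbf{L}^2)$ one needs the source in $W^{1,1}(0,T;\mathbf{L}^2)$ (so that $\partial_t^2\vec{w}(t)=(\mathbf{q}\vec{u})(t)+\Delta\vec{w}(t)$ is controlled) or in $L^1(0,T;\mathbf{H}^1_0)$, and neither is available from boundedness of $\mathbf{q}$ alone. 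Put differently, the equation itself gives $\partial_t^2\vec{u}=\Delta\vec{u}-\mathbf{B}\partial_t\vec{u}-\mathbf{C}\cdot\nabla\vec{u}-\mathbf{q}\vec{u}$, and if $\mathbf{q}$ has, say, a time jump at $t_0$ with $\vec{u}(t_0)\neq 0$, then $\partial_t^2\vec{u}$ jumps at $t_0$, which is incompatible with $\vec{u}\in \mathbf{C}^2([0,T];\mathbf{L}^2)$. So the $\mathbf{q}\in \mathbf{L}^\infty$ hypothesis is genuinely the obstruction, and your plan as written would prove the corollary under $\mathbf{q}\in W^{1,\infty}(Q)$ but not under $\mathbf{L}^\infty(Q)$.

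Two concrete ways to salvage this, which are worth spelling out rather than leaving as "mechanical": (i) strengthen the hypothesis to $\mathbf{q}\in W^{1,\infty}(Q)$, which makes the commutator source $-(\partial_t\mathbf{B})\partial_t\vec{u}-(\partial_t\mathbf{C})\cdot\nabla\vec{u}-(\partial_t\mathbf{q})\vec{u}$ lie in $\mathbf{C}([0,T];\mathbf{L}^2(\Omega))$ and closes the bootstrap exactly as you describe; or (ii) exploit the special triangular structure of the matrix $\mathbf{q}$ that actually arises in the application to Theorem~\ref{IBVP-Biwave-wellposedness} (where the only $L^\infty$ entry $q$ multiplies $U_1=u$ in the equation for $U_2$, while the equation for $U_1$ contains no $q$), which decouples the regularity argument and limits what has to be differentiated. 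One further small point: having $\partial_\nu\vec{u}(t,\cdot)\in \mathbf{H}^{1/2}(\partial\Omega)$ from the elliptic step and $\partial_t\partial_\nu\vec{u}\in \mathbf{L}^2(\Sigma)$ does not by itself give $\partial_\nu\vec{u}\in \mathbf{H}^1(\Sigma)$; you also need control of the tangential derivatives of $\partial_\nu\vec{u}$ on $\Sigma$, which is what the hidden‐regularity argument in \cite[Theorem 2.1]{RM-Matrix-potential} (applied to $\partial_t\vec{u}$ and to tangential derivatives, using that $\Box$ commutes with them up to lower order) actually provides — the appeal to that reference is the right move but the sentence as written undersells what it is doing.
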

\begin{proof}[Proof of the theorem  ~\ref{IBVP-Biwave-wellposedness}]
We denote $\vec{U}(t,x):=(u(t,x),\Box u(t,x))^{T}$. We can rewrite the first line of \eqref{IBVP} as the system of equations
\[
\Bigg \{\Box 
+\begin{bmatrix}
    0  & 0\\
    B(t,x)\partial_{t} & 0
\end{bmatrix}+
\begin{bmatrix}  
0  & 0\\   C(t,x)\cdot \nabla_{x} & 0
\end{bmatrix}
+
\begin{bmatrix}
0 & -1 \\
q(t,x) & A(t,x)
\end{bmatrix}\Bigg \}
\vec{U}=\vec{0}.
\]
Thus, solving \eqref{IBVP} is equivalent to solve
\begin{equation}
\begin{aligned}\label{Equivalent IBVP in terms of system}
\begin{cases}
\left(\Box+ \mathbf{B}(t,x,\partial_{t})+\mathbf{C}(t,x,\nabla_{x})+\mathbf{q}(t,x)\right)\vec{U}=\vec{0}&\text { in } Q,\\
(\vec{U}|_{t=0}, \partial_{t}\vec{U}|_{t=0})= \left(\psi_{0}, \psi_{2}-\Delta\psi_{0}, \psi_{1}, \psi_{3}-\Delta\psi_{1}\right)& \text { in } \Omega,\\
\vec{U}=\left(f,g\right)& \text { on } \Sigma,
	 \end{cases}
	 \end{aligned} 
  \end{equation}
where 
 \[
 \mathbf{B}(t,x,\partial_{t})=\begin{bmatrix}
     0  & 0\\
    B(t,x)\partial_{t}  & 0
 \end{bmatrix}, \quad 
 \mathbf{C}(t,x,\nabla_{x})=\begin{bmatrix}
     0  & 0\\
    C(t,x)\cdot \nabla_{x}  & 0
 \end{bmatrix}, 
 \]
 and
 \[
 \mathbf{q}(t,x)
 =\begin{bmatrix}
 0 & -1 \\
 q(t,x) & A(t,x)
 \end{bmatrix}.
 \]
Let $\psi_{j} \in H^{4-j}(\Omega)$ for $j=0,1,2,3$, $f\in H^{4}(\Sigma)$ and $g\in H^{2}(\Sigma)$ satisfy the compatibility condition
\[
(\psi_{0},\psi_{2}-\Delta_{x}\psi_{0})|_{\partial\Omega}= (f,g)|_{(\{t=0\}\times \partial\Omega)}
\mbox{ and } (\psi_{1},\psi_{3}-\Delta_{x}\psi_{1})|_{\partial\Omega}= \partial_{t}(f,g)|_{(\{t=0\}\times \partial\Omega)}.    
\]
Using Corollary~\ref{Cor_regularity} applied to \eqref{Equivalent IBVP in terms of system}, we obtain the existence of a unique solution 
\[ \vec{U}=
\begin{pmatrix} u \\ \Box u \end{pmatrix}\in \mathbf{C}\lb [0,T];\mathbf{H}^{2}(\Omega)\rb\cap \mathbf{C}^1\lb [0,T];\mathbf{H}^{1}(\Omega)\rb\cap \mathbf{C}^2\lb [0,T];\mathbf{L}^{2}(\Omega)\rb,
\]
satisfying the estimate \eqref{Energy_Est-1}.
\end{proof}

\section{Construction of Geometric optics solutions}\label{GO-Soln}
In this section, we give an explicit construction of a class of exponentially growing and a class of exponentially decaying solutions of the perturbed bi-wave operator $\mathscr{L}_{A, B, C, q}$ and its $L^2$-adjoint $\mathscr{L}_{A, B, C, q}^{\ast}$, respectively. We begin with an interior Carleman estimate for the operator $\mathscr{L}_{A,B,C,q}$, which acts as an important ingredient in the construction of solutions. To this end, we use notations from semiclassical Sobolev spaces. For any $h > 0$, a small parameter, and for any non-negative integer $m$, we define the semiclassical Sobolev space of order $m \in \mathbb{N}$ by $H^m_{\mathrm\mathrm{scl}}(Q)$, which is the space $H^m(Q)$ endowed with the semiclassical norm:
\[
 \lVert u \rVert^2_{H^m_{\mathrm{scl}}(Q)} = \sum_{|\alpha| \le m} \lVert ( h D)^{\alpha} u \rVert^2_{L^2(Q)}, 
 \quad \mbox{where } D := \nabla_{t,x}.
\]
Similar to the Sobolev spaces, for $m\geq 0$ we define $H^{-m}_{\mathrm{scl}}(\R^n)$ as the dual of the Sobolev space $H^{m}_{\mathrm{scl}}(\R^n)$.

\begin{proposition}[Interior Carleman estimate] \label{Int_Carleman_est}
Let $\vp(t,x):=t+x\cdot \omega$, where $\omega\in \mathbb{S}^{n-1}$ is fixed and $\mathscr{L}_{\vp}:= h^{4}e^{-\frac{\vp}{h}}\mathscr{L}_{A,B,C,q}e^{\frac{\vp}{h}}$. Then there exists $h_{0}>0$ such that 
    \begin{align}\label{Interior Carleman estimate}
        \lVert u\rVert_{L^{2}(\R^{1+n})} \leq \frac{C}{h^{2}}\lVert \mathscr{L}_{\vp} u\rVert_{H_\mathrm{scl}^{-2}(\R^{1+n})},
    \end{align}
   for all $u\in C_{c}^{\infty}(Q)$, $0<h\leq h_{0}$.
\end{proposition}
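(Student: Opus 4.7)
The plan is to reduce the Carleman estimate for the fourth-order operator $\mathscr{L}_{A,B,C,q}$ to iterations of a Carleman estimate for a perturbed conjugated wave operator, and then absorb the remaining first-order residuals using their smallness in $h$. Since the weight $\varphi(t,x)=t+x\cdot\omega$ satisfies $(\partial_t\varphi)^2-|\nabla_x\varphi|^2=0$ (so $\varphi$ is a limiting Carleman weight for $\Box$), a direct computation gives
\[
P_\varphi \;:=\; h^2 e^{-\varphi/h}\Box\, e^{\varphi/h} \;=\; h^2\Box + 2h(\partial_t - \omega\cdot\nabla_x),
\]
and therefore $h^4 e^{-\varphi/h}\Box^2 e^{\varphi/h}=P_\varphi^2$. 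Conjugating the lower-order terms and noting that $h^2[P_\varphi,A]=O(h^3)$ as a first-order semiclassical operator, I would group the expansion as
\[
\mathscr{L}_\varphi \;=\; P_\varphi\,(P_\varphi + h^2 A) \;+\; \widetilde{R}_h,
\]
where $\widetilde{R}_h$ is a first-order semiclassical differential operator whose total size is $O(h^3)$ (it absorbs the $h^3$ and $h^4$ contributions of $B,C,q$ together with $h^2[P_\varphi,A]$).

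For the base Carleman estimate I would split $P_\varphi = A_1 + iA_2$ with $A_1=h^2\Box$ and $A_2=-2ih(\partial_t-\omega\cdot\nabla_x)$ both formally self-adjoint on $L^2$. Since both have constant coefficients, $[A_1,A_2]=0$, giving the exact identity
\[
\|P_\varphi u\|_{L^2}^2 \;=\; h^4\|\Box u\|_{L^2}^2 + 4h^2\|(\partial_t-\omega\cdot\nabla_x)u\|_{L^2}^2.
\]
In null coordinates $z_\pm := t\pm x\cdot\omega$, $y_\perp := x-(x\cdot\omega)\omega$, we have $\partial_t-\omega\cdot\nabla_x = 2\partial_{z_-}$ and, since $Q$ is bounded, any $u\in C_c^\infty(Q)$ is compactly supported in the $z_-$ direction. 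The one-dimensional Poincaré inequality then gives $\|u\|_{L^2}\leq c_P\|\partial_{z_-}u\|_{L^2}$, and combining with the identity above yields the base estimate $h\|u\|_{L^2}\leq C\|P_\varphi u\|_{L^2}$. For $h$ small (depending on $\|A\|_{L^\infty}$), the zeroth-order perturbation $h^2 A$ is absorbed to give the same estimate for $P_\varphi + h^2 A$.

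Next, I would iterate and upgrade to the $H^{-2}_{\mathrm{scl}}$ norm on the right. Applying the base estimate to $(P_\varphi + h^2 A)u$ and then to $u$ gives $h^2\|u\|_{L^2}\leq C\|P_\varphi(P_\varphi + h^2 A)u\|_{L^2}$. To convert to the $H^{-2}_{\mathrm{scl}}$ norm, I would exploit the factorization
\[
P_\varphi \;=\; 4h\,\partial_{z_-}\bigl(1 + h\,\partial_{z_+}\bigr) \;-\; h^2\,\Delta_{y_\perp},
\]
taking partial Fourier transform in $(z_+, y_\perp)$ so that $P_\varphi$ becomes a first-order ODE in $z_-$ parameterized by the dual variables. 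Integrating this ODE against the compact support of $u$ in $z_-$ and applying Plancherel produces
\[
\|u\|_{L^2} \;\leq\; \frac{C}{h^2}\|P_\varphi(P_\varphi + h^2 A)u\|_{H^{-2}_{\mathrm{scl}}(\R^{1+n})},
\]
the two additional derivatives of slack arising from dividing twice by the nowhere-vanishing factor $1+ih\zeta_+$. Equivalently, by Hahn--Banach duality, the same bound follows from an $L^2\to H^2_{\mathrm{scl}}$ Carleman estimate for the adjoint operator $P_{-\varphi}(P_{-\varphi} + h^2 A)$.

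Finally, I would absorb the first-order residual. Since $\widetilde{R}_h:L^2\to H^{-1}_{\mathrm{scl}}$ is semiclassically bounded with norm $O(h^3)$, we have $\|\widetilde{R}_h u\|_{H^{-2}_{\mathrm{scl}}} \leq Ch^3\|u\|_{L^2}$, and writing $P_\varphi(P_\varphi + h^2 A)u = \mathscr{L}_\varphi u - \widetilde{R}_h u$ yields
\[
\|u\|_{L^2} \;\leq\; \frac{C}{h^2}\|\mathscr{L}_\varphi u\|_{H^{-2}_{\mathrm{scl}}} \;+\; C h\,\|u\|_{L^2},
\]
and the last term is absorbed on the left for $h\leq h_0$ small. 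The hard part of the argument is the third step, i.e., the upgrade from the $L^2$ norm to the $H^{-2}_{\mathrm{scl}}$ norm on the right-hand side: this is where the degeneracy of the symbol of $P_\varphi$ on its characteristic variety (a consequence of $\varphi$ being a \emph{limiting} Carleman weight) must be compensated by the compactness of $Q$ in the null direction $z_-$ and by exploiting the explicit factorization of $P_\varphi$. The factorization trick that packages $h^2 A$ into the principal operator is essential at the absorption stage, since otherwise the $h^2 AP_\varphi$ term would produce a constant (not $h$-small) contribution that cannot be absorbed.
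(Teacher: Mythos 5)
There is a genuine gap in Step~3, the claimed upgrade from the $L^2\!\to L^2$ estimate to the $L^2\!\to H^{-2}_{\mathrm{scl}}$ estimate. After taking the partial Fourier transform in $(z_+,y_\perp)$ and solving the resulting first--order ODE in $z_-$ by integration over the compact support, what you obtain is a multiplier bound of the form
\[
\|u\|_{L^2}\;\lesssim\;\frac{1}{h}\,\big\|\langle h\zeta_+\rangle^{-1}\,\widehat{P_\varphi u}\big\|_{L^2},
\]
and iterating gives the factor $\langle h\zeta_+\rangle^{-2}$. But this is \emph{weaker}, not stronger, than the target bound $\|u\|_{L^2}\lesssim h^{-2}\|P_\varphi^2u\|_{H^{-2}_{\mathrm{scl}}}=h^{-2}\|\langle h\zeta\rangle^{-2}\widehat{P_\varphi^2u}\|_{L^2}$: since $\zeta_+$ is just one component of the full dual variable $\zeta=(\zeta_-,\zeta_+,\eta_\perp)$, we have $\langle h\zeta_+\rangle\le\langle h\zeta\rangle$, hence $\langle h\zeta_+\rangle^{-2}\ge\langle h\zeta\rangle^{-2}$, and the right-hand side you obtain is \emph{larger} than $\|P_\varphi^2u\|_{H^{-2}_{\mathrm{scl}}}$. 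Concretely, the kernel $\widehat K(\zeta_-)=\bigl(4h(1+ih\zeta_+)(c+i\zeta_-)\bigr)^{-1}$, $c=h|\eta_\perp|^2/\bigl(4(1+ih\zeta_+)\bigr)$, carries no uniform decay in $\zeta_-$ or $\eta_\perp$ near the characteristic set $\{\zeta_-=0,\ \eta_\perp=0\}$; the compact support in $z_-$ rescues the $L^2$ bound through the Schur/Young estimate on $K$ but does not convert into decay of the Fourier multiplier in all frequency variables. So the phrase ``dividing twice by the nowhere--vanishing factor $1+ih\zeta_+$'' does not produce two semiclassical derivatives of slack in the $H^{-2}_{\mathrm{scl}}$ sense. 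The fundamental issue is that $\varphi=t+x\cdot\omega$ is a \emph{limiting} Carleman weight: as you correctly noted, $[A_1,A_2]=0$, so there is no positive commutator and no subelliptic gain of one semiclassical derivative, which is exactly what the $H^{s}\to H^{s+1}_{\mathrm{scl}}$ iteration needs.

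The paper circumvents this by convexifying the weight, $\varphi_\varepsilon=\varphi-\tfrac{ht^2}{2\varepsilon}$, following Dos Santos Ferreira et al.\ and \cite{K.Vashisth}. The extra convexity injects a positive commutator, producing the genuine one-derivative gain $\tfrac{h}{\sqrt\varepsilon}\|u\|_{H^{1}_{\mathrm{scl}}}\le C\|P_{\varphi_\varepsilon}u\|_{L^2}$, which is then shifted to any Sobolev index $s$ by a cutoff-and-commutator argument, iterated twice (with $s=-2$ and $s=-1$) to get the needed $L^2\to H^{-2}_{\mathrm{scl}}$ bound for $P_{\varphi_\varepsilon}^2$, and finally the $\varepsilon$-dependence is removed by the boundedness of $e^{t^2/2\varepsilon}$. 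The factor $1/\varepsilon$ is also what makes the absorption of the $O(h^2)$ perturbation $h^2AP_{\varphi_\varepsilon}$ work; your alternative of packaging $h^2A$ into the principal operator as $P_\varphi(P_\varphi+h^2A)$ is a nice way to avoid that $O(h^2)$ remainder and would make absorption cleaner, but it does not help with the missing derivative gain. To fix your proof you would need to either supply a genuine subelliptic estimate (which for the flat limiting weight is not available directly) or adopt the convexification as in the paper.
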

\begin{proof}
We first prove the estimate for the principal part of $\mathscr{L}$, i.e., $\Box^2$, and then we add the lower order terms into it.
Let $0<\ve<1$ be any real number. Consider the modified Carleman weight (see \cite{ferreira2007determining,Kian_Damping_2016,K.Vashisth})
\[
\vp_{\ve}(t,x)= \vp(t,x)-\frac{ht^{2}}{2\ve},\quad \text { in } Q.
\]
We claim for $0<h\ll\ve\ll 1$ and $s\in\R$, we have
\begin{equation}\label{Carleman_est_general_s}
\frac{h}{\sqrt{\ve}}\lVert u \rVert_{H_\mathrm{scl}^{s+1}(\R^{1+n})}\leq C \lVert e^{-\frac{\vp_{\ve}}{h}}(h^{2}\Box) e^{\frac{\vp_{\ve}}{h}}u\rVert_{H_\mathrm{scl}^{s}(\R^{1+n})}, \quad C>0,
 \end{equation}
for all $u\in C_{c}^{\infty}(Q)$.
For $s=0$, the estimate  \eqref{Carleman_est_general_s} has been proved in \cite{K.Vashisth}. We prove \eqref{Carleman_est_general_s} for any $s\in \R$ by shifting the Sobolev index, which is described as follows. Let $\widetilde{Q}$ be a bounded open set containing $Q$ in its interior, and let $\chi\in C_{c}^{\infty}(\widetilde{Q})$ with $\chi =1$ on $Q$. For any $u\in C_{c}^{\infty}(Q)$, let $v=\chi\langle hD\rangle^{s}u$, then using the estimate in \cite{K.Vashisth} for $v \in C_{c}^{\infty}(\wt{Q})$ we obtain
     \begin{equation}\label{E-4}
         \frac{h}{\sqrt{\varepsilon}}\|v\|_{H_\mathrm{scl}^{1}(\R^{1+n})}\leq C \|P_{\varphi_{\varepsilon}}v\|_{L^{2}(\R^{1+n})}, \quad \text{ where }\quad  P_{\vp_{\ve}}:= e^{-\frac{\vp_{\ve}}{h}}(h^{2}\Box) e^{\frac{\vp_{\ve}}{h}}.
    \end{equation}
Using continuity properties of pseudo-differential operators on semiclassical Sobolev spaces (see \cite{Martinez}), we obtain
\begin{equation}\label{Eq_5}
   \begin{aligned}
       h\|u\|_{H_{scl}^{s+1}(\R^{1+n})}&=h\| \langle hD\rangle^{s} \chi u\|_{H_{scl}^{1}(\R^{1+n})}\\
     &\leq h\|\chi \langle hD\rangle^{s}u\|_{H_{scl}^{1}(\R^{1+n})}+h\|\commutator{\chi} {\langle hD\rangle^{s}}u\|_{H_{scl}^{1}(\R^{1+n})}\\
     & \leq \Oc({\sqrt{\ve}})\|P_{\vp_{\ve}}\chi \langle hD\rangle^{s}u\|_{L^{2}(\R^{1+n})}+Ch^{2}\|E_{s-1}u\|_{H_{scl}^{1}(\R^{1+n})}\\
     &\leq \Oc(\sqrt{\ve})\left( \| \langle hD\rangle^{s}(P_{\vp_{\ve}}\chi) u\|_{L^{2}(\R^{1+n})}+ \| \commutator{P_{\vp_{\ve}}\chi}{\langle hD\rangle^{s}}u\|_{L^{2}(\R^{1+n})}\right)+ Ch^{2}\|u\|_{H_{scl}^{s}(\R^{1+n})}\\
     &\leq \Oc(\sqrt{\ve})\|P_{\vp_{\ve}}u\|_{H_{scl}^{s}(\R^{1+n})}+\Oc(\sqrt{\ve})h\|E_{s+1}u\|_{L^{2}(\R^{1+n})}+Ch^{2}\|u\|_{H_{scl}^{s+1}(\R^{1+n})}\\
&\leq\Oc(\sqrt{\ve})\|P_{\vp_{\ve}}u\|_{H_{scl}^{s}(\R^{1+n})}+\Oc(\sqrt{\ve})h\|u\|_{H_{scl}^{s+1}(\R^{1+n})}+Ch^{2}\|u\|_{H_{scl}^{s+1}(\R^{1+n})},
    \end{aligned}
\end{equation}
where $E_{s-1}$ and $E_{s+1}$ are pseudo-differential operators of order $s-1$ and $s+1$ respectively.
By absorbing the extra terms $\|u\|_{H_\mathrm{scl}^{s+1}}$ in the left-hand side of \eqref{Eq_5}, for all $0<h\ll \ve\ll 1$, we get
    \[\frac{h}{\sqrt{\ve}}\lVert u \rVert_{H_\mathrm{scl}^{s+1}(\R^{1+n})}
    \leq C \lVert P_{\vp_{\ve}} u \rVert_{H_\mathrm{scl}^{s}(\R^{1+n})}, \quad \text { for all $u\in C_{c}^{\infty}(Q)$.}
    \]
Using the estimate \eqref{Carleman_est_general_s} for 
$s=-2$ and $s=-1$ we get 
\begin{equation}\label{Car_Iteration}
\lVert \left(P_{\vp_{\ve}}\right)^2 u \rVert_{H_\mathrm{scl}^{-2}(\R^{1+n})}
\geq C\frac{h}{\sqrt{\ve}}\lVert P_{\vp_{\ve}} u \rVert_{H^{-1}(\R^{1+n})}
\geq C\frac{h^{2}}{\ve}\lVert u\rVert_{L^{2}(\R^{1+n})},
\end{equation}
for all $ u\in C_{c}^{\infty}(Q)$. For the perturbed bi-wave operator $\mathscr{L}_{A,B,C,q}$ defined in \eqref{operator}, we introduce the conjugated operator
\[
\mathscr{L}_{\varphi_\ve}:=h^4 e^{-\frac{\varphi_\ve}{h}} \mathscr{L}_{A,B,C,q} e^{\frac{\varphi_\ve}{h}},
\]
where $\varphi_\ve(t,x) = t+x\cdot\omega-\frac{h t^2}{2\ve}$.
Now considering the conjugated operator $\mathscr{L}_{\vp_{\ve}}$, we have
\begin{equation}\label{Car_est_1}
\begin{aligned}
     \lVert \mathscr{L}_{\vp_{\ve}}u\rVert_{H_\mathrm{scl}^{-2}} 
    \geq &\lVert 
\left(P_{\vp_{\ve}}\right)^2u\rVert_{H_\mathrm{scl}^{-2}}-\lVert e^{-\frac{\vp_{\ve}}{h}}h^{4}A\Box \left(e^{\frac{\vp_{\ve}}{h}}u\right)\rVert_{H_\mathrm{scl}^{-2}}
-\lVert e^{-\frac{\vp_{\ve}}{h}}h^{4}B\partial_{t}\left(e^{\frac{\vp_{\ve}}{h}}u\right)\rVert_{H_\mathrm{scl}^{-2}}\\
&-\lVert e^{-\frac{\vp_{\ve}}{h}}h^{4}C\cdot \nabla_{x}\left(e^{\frac{\vp_{\ve}}{h}}u\right)\rVert_{H_\mathrm{scl}^{-2}}
    -\lVert e^{-\frac{\vp_{\ve}}{h}}h^{4}qe^{\frac{\vp_{\ve}}{h}}u\rVert_{H_\mathrm{scl}^{-2}}.
    \end{aligned}
\end{equation}
Note that, 
\[
\lVert e^{-\frac{\vp_{\ve}}{h}}h^{4}qe^{\frac{\vp_{\ve}}{h}}u\rVert_{H_\mathrm{scl}^{-2}(\R^{n+1})} 
\leq h^4 \| qu\|_{L^2(Q)}
\leq h^4 \|q\|_{L^{\infty}(Q)}\|u\|_{L^2(Q)}
\leq ch^4\|u\|_{L^2(Q)},
\]
since $q \in L^{\infty}(Q)$ and $u \in C^{\infty}_c(Q)$.
Moreover, $C \in W^{1,\infty}(Q)$ ensures
\begin{equation*}
\begin{aligned}
\lVert e^{-\frac{\vp_{\ve}}{h}}h^{4}C\cdot \nabla_{x}e^{\frac{\vp_{\ve}}{h}}u\rVert_{H_\mathrm{scl}^{-2}(\R^{1+n})} 
&\leq h^3 \| h\nabla_x \cdot(C u) \|_{H_\mathrm{scl}^{-2}(\R^{1+n})}
+ h^3 \| (h\nabla_x \cdot (e^{-\frac{\vp_{\ve}}{h}}C))  u \|_{H_\mathrm{scl}^{-2}(\R^{1+n})}\\
&\leq h^3\left( \| Cu\|_{L^2(Q)} + \| (h\nabla_x \cdot (e^{-\frac{\vp_{\ve}}{h}}C))\|_{L^{\infty}(Q)} \|u\|_{L^2(Q)}\right)\\
&\leq ch^{3}\|u\|_{L^{2}(\R^{1+n})}.
\end{aligned}
\end{equation*}
Likewise, since $A \in W^{2,\infty}(\R^{n+1})$ and $B \in W^{1,\infty}(Q)$, we calculate
\[\begin{aligned}
\lVert e^{-\frac{\vp_{\ve}}{h}}h^{4}A\Box e^{\frac{\vp_{\ve}}{h}}u\rVert_{H_\mathrm{scl}^{-2}(\R^{1+n})}
+ \lVert e^{-\frac{\vp_{\ve}}{h}}h^{4}B\partial_{t}e^{\frac{\vp_{\ve}}{h}}u\rVert_{H_\mathrm{scl}^{-2}(\R^{1+n})} 
\leq Ch^{2}\|u\|_{L^{2}(\R^{1+n})}.
\end{aligned}\]
Now, combining the above estimates with \eqref{Car_Iteration} and \eqref{Car_est_1}, for $h\ll \ve\ll 1$ small enough, we obtain
\begin{equation*}
    \lVert \mathscr{L}_{\vp_{\ve}}u\rVert_{H_\mathrm{scl}^{-2}(\R^{1+n})}
    \geq c\frac{h^{2}}{\ve}\|u\|_{L^{2}(Q)}-ch^{2}\|u\|_{L^{2}(Q)}
    \geq c\frac{h^{2}}{\ve}\|u\|_{L^{2}(Q)}. 
\end{equation*}
For a fixed $\ve>0$ there exists $M>0$ such that $1\leq e^{\frac{t^{2}}{2\ve}}\leq M$. This proves the Carleman estimate \eqref{Interior Carleman estimate}.
\end{proof}
We denote the formal $L^2$ adjoint of $\mathscr{L}_{\varphi}$ as $\mathscr{L}_{\varphi}^{\ast}$ satisfying $\langle \mathscr{L}_{\varphi}^{\ast}u,v\rangle_{L^2(Q)} = \langle u,\mathscr{L}_{\varphi}v\rangle_{L^2(Q)}$,
\[
\mathscr{L}_{\varphi}^{\ast} = h^4 e^{\frac{\varphi}{h}} \mathscr{L}_{A, B, C, q}^{\ast} e^{\frac{-\varphi}{h}}.
\]
Now we compute the $L^2$ adjoint of the operator $\mathscr{L}_{A,B,C,q}$. For smooth $u,v$ compactly supported in $Q$, integration by parts yields
\[
\int_Q (\mathscr{L}_{A,B,C,q} u)v = \int_Q u\bigl(\Box^2 v + \widetilde{A}\Box v + \widetilde{B}\partial_t v + \widetilde{C}\cdot\nabla_x v + \widetilde{q} v\bigr),
\]
where
\[
\widetilde{A}=A,\quad \widetilde{B}=B+2\partial_t A,\quad \widetilde{C}=C-2\nabla_x A,\quad \widetilde{q}=\Box A+\partial_t B + \nabla_x\cdot C + q.
\]
Hence the formal $L^2$ adjoint is
\[
\mathcal{L}_{A,B,C,q}^* = \square^2 + \widetilde{A}\square + \widetilde{B}\partial_t + \widetilde{C}\cdot\nabla_x + \widetilde{q},
\]
with 
\begin{equation}\label{Adjoint_coeffs}
\begin{aligned}
\widetilde{A}(t,x) &:= A(t,x), \\
\widetilde{B}(t,x) &:= B(t,x) + 2\partial_t A(t,x), \\
\widetilde{C}(t,x) &:= C(t,x) - 2\nabla_x A(t,x), \\
\widetilde{q}(t,x) &:= \Box A(t,x) + \partial_t B(t,x) + \nabla_x \cdot C(t,x) + q(t,x).
\end{aligned}\end{equation}
Note that $\mathscr{L}_{A, B, C, q}^{\ast}$ has a similar form to $\mathscr{L}_{A, B, C, q}$ with $\wt{A}\in W^{3,\infty}(Q)$, $\wt{B},\wt{C}\in W^{2,\infty}(Q)$ and $\wt{q}\in L^{\infty}(Q)$. Therefore, the Carleman estimate derived in Proposition~\ref{Int_Carleman_est} also holds for $\mathscr{L}_{\varphi}^{\ast}$, since $-\varphi$ is also a Carleman weight. To construct geometric optics solutions, we use the following solvability result. The proof is based on standard functional analysis arguments analogous to those used in \cite{Kian_Damping_2016,K.Vashisth}.

\begin{proposition}\label{Existence of solution for conjugated operator}
Let $\vp$ be as defined in Proposition \ref{Interior Carleman estimate} and $\mathscr{L}_{A,B,C,q}$ as \eqref{operator}. For $h>0$ small enough and $v\in L^{2}(Q),$ there exists a solution $u\in H^{2}(Q)$ of 
 \[
 e^{-\frac{\vp}{h}}h^{4}\mathscr{L}_{A,B,C,q}e^{\frac{\vp}{h}}u=v, \text { in } Q,
 \]
 and satisfying
\[
\|u\|_{H_\mathrm{scl}^{2}(Q)} \leq \frac{C}{h^{2}}\|v\|_{L^{2}(Q)},
 \]
where $C>0$ is a constant independent of $h$.
\end{proposition}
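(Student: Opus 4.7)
The plan is to prove existence by a Hahn--Banach / Riesz duality argument driven by the Carleman estimate for the formal adjoint $\mathscr{L}_{\varphi}^{\ast}$ of $\mathscr{L}_{\varphi}$. Because $-\varphi$ is also a linear Carleman weight and, in view of \eqref{Adjoint_coeffs}, the adjoint $\mathscr{L}_{A,B,C,q}^{\ast}$ has the same structure and coefficient regularity as $\mathscr{L}_{A,B,C,q}$, Proposition~\ref{Int_Carleman_est} applies to the adjoint conjugated operator and yields
\[
\lVert w\rVert_{L^{2}(\R^{1+n})} \leq \frac{C}{h^{2}}\,\lVert \mathscr{L}_{\varphi}^{\ast} w\rVert_{H_{\mathrm{scl}}^{-2}(\R^{1+n})}, \qquad w\in C_{c}^{\infty}(Q),
\]
for $0 < h \leq h_{0}$. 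In particular $\mathscr{L}_{\varphi}^{\ast}$ is injective on $C_{c}^{\infty}(Q)$.

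Given $v\in L^{2}(Q)$, I would then define a linear functional $T$ on the subspace $\mathcal{M} := \mathscr{L}_{\varphi}^{\ast}\bigl(C_{c}^{\infty}(Q)\bigr) \subset H_{\mathrm{scl}}^{-2}(\R^{1+n})$ by $T(\mathscr{L}_{\varphi}^{\ast} w) := \langle w,\,v\rangle_{L^{2}(Q)}$. Injectivity of $\mathscr{L}_{\varphi}^{\ast}$ guarantees that $T$ is well defined, and Cauchy--Schwarz together with the adjoint Carleman estimate gives
\[
|T(\mathscr{L}_{\varphi}^{\ast} w)| \leq \lVert v\rVert_{L^{2}(Q)}\lVert w\rVert_{L^{2}(\R^{1+n})} \leq \frac{C}{h^{2}}\lVert v\rVert_{L^{2}(Q)}\,\lVert \mathscr{L}_{\varphi}^{\ast} w\rVert_{H_{\mathrm{scl}}^{-2}(\R^{1+n})}.
\]
Hahn--Banach then extends $T$ to a bounded functional on all of $H_{\mathrm{scl}}^{-2}(\R^{1+n})$ with the same norm, and the Riesz representation theorem---invoked through the canonical $L^{2}$-based duality $\bigl(H_{\mathrm{scl}}^{-2}(\R^{1+n})\bigr)^{\ast} \cong H_{\mathrm{scl}}^{2}(\R^{1+n})$---produces $u\in H_{\mathrm{scl}}^{2}(\R^{1+n})$ representing this extension, with $\lVert u\rVert_{H_{\mathrm{scl}}^{2}} \leq Ch^{-2}\lVert v\rVert_{L^{2}(Q)}$.

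Finally, for every test function $w\in C_{c}^{\infty}(Q)$ one has
\[
\langle w, v\rangle_{L^{2}(Q)} = T(\mathscr{L}_{\varphi}^{\ast} w) = \langle u,\, \mathscr{L}_{\varphi}^{\ast} w\rangle = \langle \mathscr{L}_{\varphi} u,\, w\rangle,
\]
by the definition of the distributional action of $\mathscr{L}_{\varphi}$ on $u$, so $\mathscr{L}_{\varphi} u = v$ in $\mathcal{D}'(Q)$; restricting $u$ to $Q$ yields the desired solution in $H^{2}(Q)$ with the claimed bound. The step I expect to be the most delicate is a bookkeeping one: aligning the three pairings involved (the $L^{2}$ inner product on $Q$ that defines $T$, the $L^{2}$-based duality between $H_{\mathrm{scl}}^{-2}$ and $H_{\mathrm{scl}}^{2}$ used in Riesz, and the adjoint identity $\langle u,\mathscr{L}_{\varphi}^{\ast} w\rangle = \langle \mathscr{L}_{\varphi} u, w\rangle$) so that they are literally the same pairing extended from $C_{c}^{\infty}$. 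Everything else---the adjoint Carleman estimate, boundedness of $T$, the Hahn--Banach extension, and the restriction to $Q$---is essentially formal once this duality is set up.
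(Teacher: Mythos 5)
Your argument is correct and is exactly the standard Hahn--Banach/Riesz duality scheme that the paper itself defers to (it states only that ``the proof is based on standard functional analysis arguments analogous to those used in \cite{Kian_Damping_2016,K.Vashisth}''): apply Proposition~\ref{Int_Carleman_est} to the adjoint $\mathscr{L}_{\varphi}^{\ast}$ (using that $-\varphi$ is also a linear Carleman weight and that $\mathscr{L}_{A,B,C,q}^{\ast}$ has the same structure and regularity), obtain a bounded functional on the range of $\mathscr{L}_{\varphi}^{\ast}$, extend and represent in $H^{2}_{\mathrm{scl}}$, and read off $\mathscr{L}_{\varphi}u=v$ in $\mathcal{D}'(Q)$. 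Your flagged bookkeeping point is legitimate but harmless here, since all three pairings are the same $L^{2}$-extended duality and the adjoint identity incurs no boundary terms for $w\in C_{c}^{\infty}(Q)$.
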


Next, using Proposition \ref{Existence of solution for conjugated operator}, we construct geometric optics solutions to the equation 
\begin{equation}\label{biwave-eqn}
\mathscr{L}_{A,B,C,q} u_g=0 \quad \text { in } Q.
\end{equation}

\begin{proposition}\label{Existence of growing solution}  
Let $\vp$ be as defined in Proposition \ref{Interior Carleman estimate} and $\mathscr{L}_{A, B, C,q}$ as \eqref{operator}. There exists $h_{0}>0$ and $r_{g} \in H^2(Q)$ such that \begin{equation}\label{decay_est}
\|r_{g}\|_{H_\mathrm{scl}^{2}(Q)}=\mathcal{O}(h^2),
\end{equation}
and
\begin{equation}\label{Exponential growing solution form}
u_{g}(t,x)= e^{\frac{\vp(t,x)}{h}}\left( a_{g_{0}}(t,x)+ha_{g_{1}}(t,x)+r_{g}(t,x;h)\right), 
\end{equation}
is a solution of the equation $\mathscr{L}_{A,B,C,q}u=0$, when $0<h\leq h_{0}$, and $a_{g_{0}}\in C^{\infty}(\Bar{Q})$, $a_{g_{1}} \in W^{2,\infty}(Q)$ solves the transport equations \eqref{First-Transport-eqn} and \eqref{Second-Transport-eqn} , respectively.
\end{proposition}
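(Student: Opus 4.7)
The plan is a WKB/geometric-optics construction. I seek a solution of the form $u_g = e^{\varphi/h}v$ with $v = a_{g_0} + h a_{g_1} + r_g$, so that \eqref{biwave-eqn} becomes equivalent to $\mathscr{L}_{\varphi} v = 0$ with the conjugated operator from Proposition~\ref{Int_Carleman_est}. First I would carry out the conjugation explicitly. Since $\varphi(t,x) = t + x\cdot\omega$ is a null phase for $\Box$ (we have $(\partial_t\varphi)^2 - |\nabla_x\varphi|^2 = 0$), a direct computation gives
\[
e^{-\varphi/h}\,\Box\, e^{\varphi/h} = \Box + \tfrac{2}{h} T, \qquad T := \partial_t + \omega\cdot\nabla_x.
\]
Because $T$ and $\Box$ commute (both are constant-coefficient), squaring yields $e^{-\varphi/h}\Box^2 e^{\varphi/h} = \Box^2 + \tfrac{4}{h}\Box T + \tfrac{4}{h^2} T^2$; the lower-order perturbations contribute only terms of order $h^{-1}$ or milder under conjugation. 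After the overall multiplication by $h^4$, the most singular content of $\mathscr{L}_\varphi$ is $4h^2 T^2 + 4h^3 \Box T$, with all other contributions at least of order $h^3$.

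Substituting $v$ and collecting by powers of $h$, the $h^2$ coefficient is $4 T^2 a_{g_0}$, which I annihilate by imposing the first transport equation $T a_{g_0} = 0$. Because $T$ commutes with every constant-coefficient operator, this choice also forces $\Box T a_{g_0} = 0$ and $A T a_{g_0} = 0$, so the $h^3$ coefficient collapses to
\[
4 T^2 a_{g_1} + (B + C\cdot\omega)\, a_{g_0} = 0,
\]
which I take as the second transport equation. Both equations are linear ODEs along the characteristic lines $s \mapsto (t_0 + s,\, x_0 + s\omega)$ of $T$: prescribing smooth transverse Cauchy data gives $a_{g_0} \in C^\infty(\bar{Q})$, and integrating the second equation twice along characteristics yields $a_{g_1} \in W^{2,\infty}(Q)$ in view of $B, C \in W^{2,\infty}(Q)$.

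With $a_{g_0}$ and $a_{g_1}$ fixed, the residue $F_g := -\mathscr{L}_\varphi (a_{g_0} + h a_{g_1})$ satisfies $\|F_g\|_{L^2(Q)} = \Oc(h^4)$, since all $h^2$ and $h^3$ contributions cancel by the transport equations and the surviving terms involve $\Box^2 a_{g_0}$, $\Box a_{g_1}$, $A\Box a_{g_0}$, etc., all uniformly bounded in $h$. The remainder $r_g$ must solve $\mathscr{L}_\varphi r_g = F_g$, and Proposition~\ref{Existence of solution for conjugated operator} delivers $r_g \in H^2(Q)$ with
\[
\|r_g\|_{H^{2}_{\mathrm{scl}}(Q)} \le \frac{C}{h^2}\|F_g\|_{L^2(Q)} \le c h^2,
\]
which is exactly \eqref{decay_est}. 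Assembling the three ingredients produces the claimed $u_g$. The main obstacle is the algebraic bookkeeping for the double null characteristic of $\Box^2$: the first-order condition $T a_{g_0} = 0$ is chosen in place of the weaker $T^2 a_{g_0} = 0$ precisely so that the cancellations $\Box T a_{g_0} = T \Box a_{g_0} = 0$ reduce the $h^3$ equation to the compact form above and guarantee that $a_{g_1}$ inherits sufficient regularity for Proposition~\ref{Existence of solution for conjugated operator}; once these cancellations are verified, solving the two constant-coefficient transport ODEs along straight lines and invoking the solvability proposition are routine.
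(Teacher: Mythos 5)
Your overall strategy (conjugate, collect powers of $h$, solve transport equations, treat the remainder via Proposition~\ref{Existence of solution for conjugated operator}) matches the paper, but there is one substantive deviation that amounts to a gap, plus a sign slip.

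The gap: you impose the first-order condition $Ta_{g_0}=0$ in place of the paper's second-order condition $T^2a_{g_0}=0$, which is the actual equation \eqref{First-Transport-eqn} in the statement. Your justification --- that $Ta_{g_0}=0$ is needed to cancel $\Box Ta_{g_0}$ and $ATa_{g_0}$ in the $h^3$ layer and to secure the regularity of $a_{g_1}$ --- is not correct: the paper keeps those terms on the right-hand side of \eqref{Second-Transport-eqn} and then solves $T^2a_{g_1} = (\text{RHS})$ in two passes ($Tv = g$ followed by $Ta_{g_1}=v$), which works perfectly well with $a_{g_0}\in C^\infty(\bar Q)$, $A\in W^{3,\infty}$, $B,C\in W^{2,\infty}$ and produces $a_{g_1}\in W^{2,\infty}(Q)$. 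Restricting to $Ta_{g_0}=0$ therefore establishes the proposition only for a strict subfamily of admissible leading amplitudes, and this matters downstream: in Section~\ref{Proof_main_thm} the recovery of $A$ uses $a_{g_0}(t,x) = e^{-\I\xi\cdot(t,x)}(1,-\omega)\cdot(t,x)$, which satisfies $T^2a_{g_0}=0$ but $Ta_{g_0}=2e^{-\I\xi\cdot(t,x)}\neq 0$. Your construction does not supply a GO solution with this $a_{g_0}$, so the argument as written would not cover all the amplitudes the main theorem needs. Moreover, with $Ta_{g_0}=0$ your second transport equation $4T^2 a_{g_1}+(B+C\cdot\omega)a_{g_0}=0$ is not the same as \eqref{Second-Transport-eqn} in general, so the $a_{g_1}$ you construct need not satisfy the equation the statement asserts.

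Minor but worth correcting: with $\varphi(t,x)=t+x\cdot\omega$, the conjugation gives $e^{-\varphi/h}\Box e^{\varphi/h}=\Box+\tfrac{2}{h}(\partial_t-\omega\cdot\nabla_x)$, so the transport field is $T=\partial_t-\omega\cdot\nabla_x$ (as in the paper), not $\partial_t+\omega\cdot\nabla_x$; correspondingly, the characteristics are $s\mapsto(t_0+s,\,x_0-s\omega)$. The remainder step and the use of Proposition~\ref{Existence of solution for conjugated operator} with the $\Oc(h^4)$ bound on the residue are as in the paper. To close the gap, keep \eqref{First-Transport-eqn} as $T^2a_{g_0}=0$, carry all three terms $-\tfrac12(\Box T+T\Box)a_{g_0}$, $-\tfrac12 ATa_{g_0}$, $-\tfrac14(B+C\cdot\omega)a_{g_0}$ into the second transport equation, and solve it by two successive applications of the $Tv=g$ formula.
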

\begin{proof}
We re-write the conjugated operator $e^{\frac{-\vp}{h}}h^{4}\mathscr{L}_{A,B,C,q}e^{\frac{\vp}{h}}$ as
\begin{equation*}
\begin{aligned}
e^{\frac{-\vp}{h}}h^{4}\mathscr{L}_{A,B,C,q}e^{\frac{\vp}{h}}
& =\left( h^{2}\Box + 2hT\right)^{2}+h^{2}A\left( h^{2}\Box + 2hT\right)+ h^{4}B\partial_{t}+h^{4}C\cdot \nabla \\
&\quad \quad \quad +h^{3}(B+C\cdot \omega)+h^{4}q,
\end{aligned}
\end{equation*}
where $T=(\PD_{t}-\omega\cdot\nabla_{x})$. Now substituting \eqref{Exponential growing solution form} into $h^4\mathscr{L}_{A,B,C,q}u_{g}(t,x)=0$ and by equating the terms of different powers of $h$ to zero, in $Q$ we get the following equations:
\begin{align}
    T^{2}a_{g_{0}}(t,x)=&0,  \label{First-Transport-eqn}\\
    T^{2}a_{g_{1}}(t,x)=& \lb-\frac{1}{2}(\Box\circ T+ T\circ\Box)-\frac{1}{4}B-\frac{1}{4}C\cdot \omega-\frac{1}{2}A T\rb a_{g_{0}}, \label{Second-Transport-eqn}\\
    e^{\frac{-\vp}{h}}h^{4}\mathscr{L}_{A,B,C,q}(e^{\frac{\vp}{h}}r_{g})
   =&-e^{-\frac{\vp}{h}}h^{4}\mathscr{L}_{A,B,C,q}\left(e^{\frac{\vp}{h}}(a_{g_{0}}+ha_{g_{1}})\right). \label{eqn-for-remainder}
\end{align}
To solve \eqref{First-Transport-eqn} and \eqref{Second-Transport-eqn}, we first find $v\in W^{2,\infty}(Q)$ which satisfies 
\begin{equation}\label{Transport_sys1}
    Tv=g \quad \text { in } Q ,\quad \text { for given } g\in W^{2,\infty}(Q);
\end{equation}
and then solve 
\begin{equation}\label{Transport_sys2}
    Ta_{g_{1}}=v \quad \text { in } Q.
\end{equation}
Following \cite{Evans-PDE}, given $g\in W^{2,\infty}(Q)$, we can solve \eqref{Transport_sys1} as
\[
v(t,x)=\int_{0}^{t}g(s,x+\omega(t-s))\D s + f(x+t\omega), \quad \text { for any } f\in W^{2,\infty}(\R^n).
\]
Similarly, the equation \eqref{Transport_sys2} can be solved; hence \eqref{Second-Transport-eqn} is solvable.
Having chosen $a_{g_{0}}\in C^{\infty}(\bar{Q})$ and $a_{g_{1}}\in W^{2,\infty}(Q)$, from \eqref{eqn-for-remainder} we get
\[
\|e^{\frac{-\vp}{h}}h^{4}\mathscr{L}_{A,B,C,q}(e^{\frac{\vp}{h}}r_{g})\|_{L^2(Q)}=\mathcal{O}(h^4).
\]
Now using Proposition \ref{Existence of solution for conjugated operator}, for $h>0$ small enough, there exists solution $r_{g}\in H^{2}(Q)$ of \eqref{eqn-for-remainder} with $\|r_{g}\|_{H_\mathrm{scl}^{2}(Q)}=\mathcal{O}(h^2)$.
\end{proof}
Similarly, we can construct geometric optic solutions for the formal $L^2$-adjoint operator $\mathscr{L}_{A, B, C,q}^{\ast}$ in $Q$.

\begin{corollary}\label{Decaying_soln}
Let $\mathscr{L}_{A,B,C,q}^{\ast}$ be as defined above and $\vp(t,x)=t+x\cdot \omega$, where $\omega\in\Sb^{n-1}$ is fixed. There exists $h_{0}>0$ and $r_{d}$ such that $\|r_{d}\|_{H_\mathrm{scl}^{2}(Q)}=\mathcal{O}(h^2)$ and
\begin{equation}\label{Exponential decaying solution form}
v_{d}(t,x)= e^{-\frac{\vp(t,x)}{h}}\left( b_{d_{0}}(t,x)+hb_{d_{1}}(t,x)+r_{d}(t,x;h)\right), 
 \end{equation}
is a solution of the equation $\mathscr{L}_{A,B,C,q}^{\ast}v=0$, when $0<h\leq h_{0}$, and $b_{d_{0}}$, $b_{d_{1}}$ satisfies the following transport equations:
\begin{equation}\label{Transport for adjoint eqns}
\begin{gathered}
T^2 b_{d_0}(t,x) = 0 \quad \text{in } Q, \quad \text{where} \quad T := (\partial_t - \omega \cdot \nabla),\\
T^2 b_{d_1}(t,x) = \left( -\frac{1}{2} (\Box \circ T + T \circ \Box) - \frac{1}{4} \wt{B} - \frac{1}{4} \wt{C} \cdot \omega - \frac{1}{2} \wt{A} T \right) b_{d_0}(t,x) \quad \text{in}\quad Q.
\end{gathered}
\end{equation}
\end{corollary}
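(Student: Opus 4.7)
The plan is to mirror the construction in Proposition~\ref{Existence of growing solution}, with $\mathscr{L}_{A,B,C,q}$ replaced by its formal adjoint and the Carleman weight $\varphi$ replaced by $-\varphi$. The first observation is that $\mathscr{L}_{A,B,C,q}^{\ast}$ is structurally identical to $\mathscr{L}_{A,B,C,q}$: it is a bi-wave operator with lower-order time-dependent coefficients $\wt{A}, \wt{B}, \wt{C}, \wt{q}$ of the same regularity class, as explicitly recorded in \eqref{Adjoint_coeffs}. Moreover, since $\varphi(t,x)=t+x\cdot\omega$ is linear, $-\varphi$ is equally admissible as a Carleman weight, so both the interior Carleman estimate of Proposition~\ref{Int_Carleman_est} and (by the duality argument used in Proposition~\ref{Existence of solution for conjugated operator}) the corresponding solvability statement transfer verbatim to the operator $e^{\varphi/h} h^{4}\mathscr{L}_{A,B,C,q}^{\ast} e^{-\varphi/h}$.

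Next I would expand this conjugated adjoint. Using $e^{\varphi/h}\partial_t e^{-\varphi/h}=\partial_t-\tfrac{1}{h}$ and $e^{\varphi/h}\nabla_x e^{-\varphi/h}=\nabla_x-\tfrac{\omega}{h}$, one obtains $h^{2}e^{\varphi/h}\Box e^{-\varphi/h}=h^{2}\Box - 2hT$ with $T=\partial_t-\omega\cdot\nabla_x$, so the principal part becomes $(h^{2}\Box - 2hT)^{2}$, and the lower-order terms pick up an overall sign change on the $1/h$ contributions, yielding an expression of the form
\[
(h^{2}\Box - 2hT)^{2} + h^{2}\wt{A}(h^{2}\Box - 2hT) + h^{4}\wt{B}\partial_t + h^{4}\wt{C}\cdot\nabla_x - h^{3}(\wt{B}+\wt{C}\cdot\omega) + h^{4}\wt{q}.
\]
Substituting the ansatz $v_{d}=e^{-\varphi/h}(b_{d_{0}}+h b_{d_{1}}+r_{d})$ into $\mathscr{L}_{A,B,C,q}^{\ast}v_{d}=0$ and collecting powers of $h$, the order-$h^{2}$ term forces $T^{2}b_{d_{0}}=0$, while the order-$h^{3}$ cancellation produces the second transport equation in \eqref{Transport for adjoint eqns}, now involving the adjoint coefficients $\wt{A},\wt{B},\wt{C}$.

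I would then solve these transport equations exactly as in Proposition~\ref{Existence of growing solution}. Since $T$ is a constant-coefficient directional derivative along the light-like direction $(1,-\omega)$, the homogeneous equation $T^{2}b_{d_{0}}=0$ admits smooth solutions such as $b_{d_{0}}(t,x)=f(x+t\omega)+t\,g(x+t\omega)$ for any $f,g\in C^{\infty}(\R^{n})$; the freedom in choosing $f,g$ will later be exploited to reduce recovery of coefficients to light-ray transforms. The inhomogeneous second-order equation for $b_{d_{1}}$ is then split into two first-order transport problems, each solvable by explicit integration along characteristics, and the required $W^{2,\infty}$-regularity of the forcing term is guaranteed by the hypotheses $\wt{A}\in W^{3,\infty}(Q)$, $\wt{B},\wt{C}\in W^{2,\infty}(Q)$.

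Finally, with $b_{d_{0}}$ and $b_{d_{1}}$ chosen, the remainder $r_{d}$ must satisfy
\[
e^{\varphi/h} h^{4}\mathscr{L}_{A,B,C,q}^{\ast}\bigl(e^{-\varphi/h}r_{d}\bigr) = -\,e^{\varphi/h} h^{4}\mathscr{L}_{A,B,C,q}^{\ast}\bigl(e^{-\varphi/h}(b_{d_{0}}+h b_{d_{1}})\bigr),
\]
and by design the right-hand side is $\mathcal{O}(h^{4})$ in $L^{2}(Q)$ because the $h^{2}$- and $h^{3}$-contributions have been cancelled. Applying the adjoint analogue of Proposition~\ref{Existence of solution for conjugated operator} then yields $r_{d}\in H^{2}(Q)$ with $\|r_{d}\|_{H^{2}_{\mathrm{scl}}(Q)}=\mathcal{O}(h^{2})$, which is precisely the asserted remainder estimate. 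No genuinely new difficulty arises; the main bookkeeping points are the sign flip produced by conjugating with $-\varphi$ and the systematic use of the adjoint coefficients $\wt{A},\wt{B},\wt{C},\wt{q}$ in place of the originals.
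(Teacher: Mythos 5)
Your approach is exactly the one the paper intends: the paper offers no explicit proof and simply says ``Similarly, we can construct geometric optics solutions for the adjoint,'' so mirroring Proposition~\ref{Existence of growing solution} with $\varphi \mapsto -\varphi$ and $(A,B,C,q)\mapsto(\wt{A},\wt{B},\wt{C},\wt{q})$ is precisely the right plan. Your bookkeeping of the conjugated operator is also correct:
\[
e^{\varphi/h} h^{4}\mathscr{L}_{A,B,C,q}^{\ast} e^{-\varphi/h}
= (h^{2}\Box - 2hT)^{2} + h^{2}\wt{A}(h^{2}\Box - 2hT) + h^{4}\wt{B}\partial_t + h^{4}\wt{C}\cdot\nabla_x - h^{3}(\wt{B}+\wt{C}\cdot\omega) + h^{4}\wt{q}.
\]

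However, you then assert without checking that the $h^{3}$-cancellation ``produces the second transport equation in \eqref{Transport for adjoint eqns}.'' If you actually collect the $h^{3}$ terms from your own expansion, you get
\[
4T^{2}b_{d_1} - 2(\Box\circ T + T\circ\Box)b_{d_0} - 2\wt{A}T\,b_{d_0} - (\wt{B}+\wt{C}\cdot\omega)\,b_{d_0} = 0,
\]
i.e.
\[
T^{2}b_{d_1} = \Bigl(\tfrac{1}{2}(\Box\circ T + T\circ\Box) + \tfrac{1}{4}\wt{B} + \tfrac{1}{4}\wt{C}\cdot\omega + \tfrac{1}{2}\wt{A}T\Bigr)b_{d_0},
\]
whose signs are uniformly opposite to those written in \eqref{Transport for adjoint eqns}. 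This is the sign flip you yourself flagged as ``the main bookkeeping point,'' so you should have noticed that the derived transport equation does not literally coincide with the one stated in the corollary; it appears the paper carried over the signs from \eqref{Second-Transport-eqn} without flipping them. The discrepancy is harmless for the rest of the argument, since Section~\ref{Proof_main_thm} only uses $b_{d_0}$ explicitly and never the precise right-hand side of the $b_{d_1}$ equation, but a complete proof should record the correct sign and note that either transport problem is equally solvable by two successive integrations along characteristics.
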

\begin{remark}\label{Rem_regularity}
It is clear from the above that $b_{d_1} \in H^2(Q)$ provided $\widetilde{A},\widetilde{B},\widetilde{C} \in W^{2,\infty}(Q)$. On the other hand, from \eqref{Adjoint_coeffs} we need
$A \in W^{3,\infty}(Q)$ and $B,C \in W^{2,\infty}(Q)$ to achieve the above regularities for $\widetilde{A},\widetilde{B},\widetilde{C}$. Since we work with $v_d$ in $H^2(Q)$, this justifies our regularity assumptions for the coefficients in Theorem~\ref{Main-Result}.
\end{remark}

\section{Proof of main Theorem}\label{Proof_main_thm}
In this section, we prove the unique recovery of the coefficients, that is, Theorem \ref{Main-Result}. We use the special solutions constructed in the previous section to obtain Light Ray Transforms for the coefficients. We prove a boundary Carleman estimate, which helps us to estimate the boundary terms appear due to partial measurements. In the final step, we use injectivity results of LRT to complete the proof.
\subsection{Boundary Carleman estimate}
We prove a boundary Carleman estimate for the operator $\mathscr{L}_{A,B,C,q}$.
\begin{proposition}
Let $\vp(t,x)=t+x\cdot \omega$ where $\omega\in \mathbb{S}^{n-1}$ is fixed. Let $u\in C^{4}(\overline{Q})$ such that 
\begin{equation}\label{Intial_boundary_0}
    \partial^{l}_{t}u|_{t=0}=0, \quad \text{for } l=0,1,2,3 \quad \text{and}\quad  u|_{\Sigma}=(\Box u)|_{\Sigma}=0.
\end{equation}
If $A$, $B$, $C$ and $q$ are as in Theorem \ref{Main-Result} and $\Sigma_{\pm,\omega}$ be as defined in \eqref{lateral_bddy_part}, then, there exists $h_0>0$ such that for $0<h<h_0$ we have
\begin{equation}\label{Boundary-Carleman-estimate}
\begin{aligned}
 &\lVert e^{-\vp/h}h^{4}\mathscr{L}_{A,B,C,q} u\rVert ^{2}_{L^{2}(Q)}
 +\sum_{k,l=0}^{1}h^{4+l}\lVert e^{-\frac{\vp(T,\cdot)}{h}}(\nabla_{x})^{l}\Box^k u(T,\cdot)\rVert^{2}_{L^{2}(\Omega)}\\
 &
 +\sum_{l=0}^{1}h^{5-2l} \left(
\lVert e^{-\vp/h} (-\partial_{\nu}\vp)^{\frac{1}{2}} \partial_{\nu}(h^{2}\Box)^l u\rVert^{2}_{L^{2}(\Sigma_{-,\omega})}\right)\\
  &\geq \frac{1}{C}\biggl\{\sum_{l=0}^{1} h^{4-2l}\lVert e^{-\vp/h} (h^{2}\Box)^l u\rVert^{2}_{H_\mathrm{scl}^{1}(Q)}\\
&\quad \quad +\sum_{l=0}^{1} h^{5-2l} \left( 
\lVert e^{\frac{-\vp(T,\cdot)}{h}}\partial_{t}(h^{2}\Box)^l u(T,\cdot)\rVert^{2}_{L^{2}(\Omega)}
+\lVert(\partial_{\nu}\vp)^{\frac{1}{2}} e^{-\vp/h}\partial_{\nu}(h^{2}\Box)^l u\rVert^{2}_{L^{2}(\Sigma_{+,\omega})} 
\right)\biggl\}.
   \end{aligned} 
   \end{equation}
   \end{proposition}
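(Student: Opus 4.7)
My plan is to exploit the factorization $\Box^2 = \Box \circ \Box$ and reduce the desired bi-wave boundary Carleman estimate to two applications of a wave-operator boundary Carleman estimate: one to $u$ and one to the auxiliary function $v := h^2\Box u$. Under the hypotheses $u|_\Sigma = (\Box u)|_\Sigma = 0$ and $\partial_t^l u|_{t=0} = 0$ for $l=0,1,2,3$, the function $v$ inherits the same homogeneous data: indeed $v|_\Sigma = h^2(\Box u)|_\Sigma = 0$, while $v|_{t=0} = h^2(\partial_t^2 u - \Delta_x u)|_{t=0} = 0$ (using $u|_{t=0} = \partial_t^2 u|_{t=0} = 0$, which forces $\Delta_x u|_{t=0} = 0$), and similarly $\partial_t v|_{t=0} = 0$. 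Hence the same wave Carleman estimate will apply to both $u$ and $v$.

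\textbf{Wave-operator building block.} A direct computation using $\partial_t\varphi = 1$ and $|\nabla_x\varphi| = |\omega| = 1$ gives
\[
P_\varphi := e^{-\varphi/h}(h^2\Box)e^{\varphi/h} = h^2\Box + 2h T, \qquad T := \partial_t - \omega\cdot\nabla_x.
\]
I split $P_\varphi = S + A$ with $S := h^2\Box$ self-adjoint and $A := 2hT$ skew-adjoint on $L^2(\R^{1+n})$. Expanding
\[
\|P_\varphi w\|_{L^2(Q)}^2 = \|S w\|_{L^2(Q)}^2 + \|A w\|_{L^2(Q)}^2 + 2\,\mathrm{Re}\,\langle S w, A w\rangle_{L^2(Q)}
\]
and integrating the cross term by parts in both $t$ and $x$ (the interior commutator $[S,A]$ vanishes because $\omega$ is constant), the $\Sigma$-contribution becomes $2h^3\int_\Sigma(\partial_\nu\varphi)|\partial_\nu w|^2\,d\sigma\,dt$, which splits naturally into $\Sigma_{\pm,\omega}$ according to the sign of $\omega\cdot\nu$, while the contribution at $t = T$ yields a "good" term proportional to $\|\partial_t w(T)\|_{L^2(\Omega)}^2$ together with indefinite-sign "bad" terms involving $w(T)$ and $\nabla_x w(T)$ (arising from $\int_\Omega\partial_t|\partial_t w|^2$, $\int_\Omega\partial_t|\nabla_x w|^2$, and the cross product $\int_\Omega \partial_t w\,\omega\cdot\nabla_x w$). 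Moving the $\Sigma_{-,\omega}$ piece and the indefinite final-time pieces to the left, I obtain a wave-boundary Carleman estimate whose structure precisely mirrors the target inequality when restricted to $l = 0$.

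\textbf{Double application and absorption of lower-order terms.} I apply the wave estimate once to $w = u$ (multiplied by a compensating factor $h^2$) and once to $w = v = h^2\Box u$. The two inequalities combine because the forcing $\|e^{-\varphi/h}h^2\Box u\|_{L^2}^2 = \|e^{-\varphi/h}v\|_{L^2}^2$ appearing on the left of the $u$-estimate is dominated by the interior $H^1_{\mathrm{scl}}$ contribution of $v$ on the right of the $v$-estimate, and is therefore absorbed. The resulting inequality has the weight structure $h^{4-2l}$, $h^{5-2l}$, $h^{4+l}$ of the target, but with $\Box^2 u$ in place of $\mathscr{L}_{A,B,C,q}u$. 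I then replace $\Box^2 u$ by $\mathscr{L}_{A,B,C,q}u$ using
\[
h^4\Box^2 u = h^4\mathscr{L}_{A,B,C,q}u - h^4(A\Box + B\partial_t + C\cdot\nabla_x + q)u,
\]
and bound the perturbation by the $L^\infty$ norms of $A,B,C,q$:
\[
\|e^{-\varphi/h}h^4(A\Box + B\partial_t + C\cdot\nabla_x + q)u\|_{L^2(Q)} \leq C\bigl(h^2\|e^{-\varphi/h}v\|_{L^2(Q)} + h^3\|e^{-\varphi/h}u\|_{H^1_{\mathrm{scl}}(Q)} + h^4\|e^{-\varphi/h}u\|_{L^2(Q)}\bigr).
\]
Squaring contributes an $O(h)$ multiple of the right-hand side of the target, which is absorbed for $h$ small.

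\textbf{Main obstacle.} The principal technical difficulty is the careful bookkeeping of every boundary and final-time contribution produced by the cross-term integration by parts: I must verify that the lateral term assembles into the clean form $(\partial_\nu\varphi)|\partial_\nu w|^2$ (so that the split $\Sigma_{\pm,\omega}$ respects the sign of $\omega\cdot\nu$), that the indefinite final-time contributions $\|e^{-\varphi(T,\cdot)/h}w(T)\|_{L^2(\Omega)}^2$ and $\|e^{-\varphi(T,\cdot)/h}\nabla_x w(T)\|_{L^2(\Omega)}^2$ can be legitimately moved to the LHS, and that the two applications (to $u$ and to $v$) combine with matching powers of $h$ to reproduce the exact weight pattern in the statement. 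A secondary but necessary ingredient is the Cauchy-data computation for $v$, which uses $\Delta_x u|_{t=0} = \Delta_x\partial_t u|_{t=0} = 0$ — both consequences of $u|_{t=0} = \partial_t u|_{t=0} = 0$ on $\Omega$.
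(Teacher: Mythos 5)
Your global architecture is the same as the paper's: apply a wave-operator boundary Carleman estimate first to $u$ and then to $v = h^2\Box u$ (after checking, as you correctly do, that $v$ inherits the homogeneous Cauchy and Dirichlet data from $\partial_t^l u|_{t=0}=0$, $l\leq 3$, and $u|_\Sigma = (\Box u)|_\Sigma = 0$), combine so that the forcing $\|e^{-\varphi/h}h^2\Box u\|_{L^2}^2$ in the $u$-estimate is absorbed by the $H^1_{\mathrm{scl}}$ positivity of the $v$-estimate, and finally bound the perturbation $(A\Box + B\partial_t + C\cdot\nabla_x + q)$ by $\mathcal{O}(h)$ times the right side and absorb for $h$ small. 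That iteration-and-absorption scheme, including the resulting weight pattern $h^{4-2l},\,h^{5-2l},\,h^{4+l}$, is exactly what the paper carries out.

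The gap is in your sketch of the wave building block itself. You observe, correctly, that $[S,A]=[h^2\Box,\,2hT]=0$ because $\omega$ is constant, so $2\,\mathrm{Re}\,\langle Sw,Aw\rangle$ reduces to boundary and terminal contributions. But then $\|P_\varphi w\|^2 = \|Sw\|^2 + \|Aw\|^2 + (\text{boundary/terminal})$ gives, as interior positivity, only $\|Aw\|^2 = 4h^2\|Tw\|^2$, and the single transport derivative $Tw = (\partial_t - \omega\cdot\nabla_x)w$ does not control $\|w\|_{H^1_{\mathrm{scl}}}^2$ (nor even $\|w\|_{L^2}^2$), which is what your claim that the estimate "mirrors the target inequality when restricted to $l=0$" requires. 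The linear phase $\varphi = t + x\cdot\omega$ is precisely the degenerate case in which the commutator positivity vanishes --- the same reason the interior Carleman estimate in Proposition~\ref{Int_Carleman_est} works with the convexified weight $\varphi_\varepsilon = \varphi - ht^2/(2\varepsilon)$ rather than $\varphi$. The paper does not re-derive the wave-level boundary estimate: it imports \eqref{Boundary-Carleman-Est} verbatim from \cite[Theorem~3.1]{K.Vashisth}, where a multiplier/energy argument adapted to the one-sided boundary conditions produces the needed $H^1_{\mathrm{scl}}$ coercivity. To make your proof self-contained you must either cite that result, as the paper does, or supply those additional integration-by-parts steps; as written, the positivity does not follow from the self-adjoint/skew-adjoint split alone. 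Granting the building block, the remainder of your argument is correct and agrees with the paper.
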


\begin{proof}
To prove the estimate \eqref{Boundary-Carleman-estimate}, we begin by recalling the boundary Carleman estimate for the conjugated semiclassical wave operator $(h^2 e^{-\vp/h}\Box e^{\vp/h})$ from \cite[Theorem 3.1]{K.Vashisth} as
\begin{equation}\label{Boundary-Carleman-Est} 
 \begin{aligned}
 &\lVert  e^{-\vp/h}h^{2}\Box u\rVert ^{2}_{L^{2}(Q)}+h^{2}\left\langle e^{-\frac{\vp(T,\cdot)}{h}}u(T,\cdot),e^{-\frac{\vp(T,\cdot)}{h}}u(T,\cdot)\right\rangle_{L^{2}(\Omega)}
 \\
& +h^{3}\left\langle e^{-\frac{\vp(T,\cdot)}{h}}\nabla_{x}u(T,\cdot),e^{-\frac{\vp(T,\cdot)}{h}}\nabla_{x}u(T,\cdot)\right\rangle_{L^{2}(\Omega)}+h^{3}\left\langle  e^{-\vp/h}\lb-\partial_{\nu}\vp\rb\partial_{\nu}u,e^{-\vp/h}\partial_{\nu}u\right\rangle_{L^{2}\lb\Sigma_{-,\omega}\rb}\\
&\geq \frac{1}{C}\biggl\{
  h^{2}\lVert e^{-\vp/h}u\rVert^{2}_{H_\mathrm{scl}^{1}(Q)}+ h^{3}\left\langle  e^{-\vp/h}(\partial_{\nu}\vp)\partial_{\nu}u,e^{-\vp/h}\partial_{\nu}u\right\rangle_{L^{2}(\Sigma_{+,\omega})}
  \\
 &\hspace{5cm} +h^{3}\left\langle e^{-\frac{\vp(T,\cdot)}{h}}\partial_{t}u(T,\cdot),e^{-\frac{\vp(T,\cdot)}{h}}\partial_{t}u(T,\cdot)\right\rangle_{L^{2}(\Omega)}
  \biggl\},
\end{aligned}
 \end{equation}
holds for all $u\in C^{2}(\overline{Q})$ with $u|_{\Sigma}=0,\ u|_{t=0}=\partial_{t}u|_{t=0}=0,$ and $h>0$ small enough. 
Replacing $u$ with $(h^{2}\Box u)$ for $u\in C^{4}(\overline{Q})$ satisfying \eqref{Intial_boundary_0} in \eqref{Boundary-Carleman-Est}, we obtain
\begin{equation}\label{bdy_car_est}
\begin{aligned}
 & \lVert  e^{-\vp/h}(h^{2}\Box)^{2} u\rVert ^{2}_{L^{2}(Q)} +\sum_{l=0}^{1}h^{2+l}\lVert e^{-\frac{\vp(T,\cdot)}{h}}(\nabla_{x})^{l}(h^{2}\Box) u(T,\cdot)\rVert^{2}_{L^{2}(\Omega)}\\
 &+\sum_{l=0}^{1}h^{4+l}\lVert e^{-\frac{\vp(T,\cdot)}{h}}(\nabla_{x})^{l}u(T,\cdot)\rVert^{2}_{L^{2}(\Omega)}+\sum_{l=0}^{1}h^{5-2l} \left(
\lVert e^{-\vp/h} (-\partial_{\nu}\vp)^{\frac{1}{2}} \partial_{\nu}(h^{2}\Box)^l u\rVert^{2}_{L^{2}(\Sigma_{-,\omega})}\right)\\
  &\geq \frac{1}{C} \frac{h^{2}}{2}\lVert e^{-\vp/h}(h^{2}\Box) u\rVert^{2}_{H_\mathrm{scl}^{1}(Q)} + 
  \frac{1}{2C^2}\biggl\{h^{4}\lVert e^{-\vp/h}u\rVert^{2}_{H_\mathrm{scl}^{1}(Q)}+
  +h^{3}\lVert(\partial_{\nu}\vp)^{\frac{1}{2}}e^{-\vp/h}\partial_{\nu}(h^{2}\Box)u\rVert^{2}_{L^{2}(\Sigma_{+,\omega})}\\
  &+h^{5}\lVert(\partial_{\nu}\vp)^{\frac{1}{2}}e^{-\vp/h}\partial_{\nu}u\rVert^{2}_{L^{2}(\Sigma_{+,\omega})}+h^{5}\lVert e^{\frac{-\vp(T,\cdot)}{h}}\partial_{t}u(T,\cdot)\rVert^{2}_{L^{2}(\Omega)}+h^{3}\lVert e^{-\frac{\vp(T,\cdot)}{h}}\partial_{t}(h^{2}\Box) u(T,\cdot)\rVert^{2}_{L^{2}(\Omega)}\biggl\},
  \end{aligned}
  \end{equation}
holds true for all $u\in C^{4}(\overline{Q})$ with \eqref{Intial_boundary_0}, $h>0$ small enough and the constant $C$ is independent of $u$ and $h$.
Now, we estimate the lower order perturbations as
\begin{equation*}
    \begin{aligned}
  &\lVert e^{\frac{-\vp}{h}}h^{2}A(h^{2}\Box) u\rVert_{L^{2}(Q)}\leq \mathcal{O}(h^{2})\lVert e^{\frac{-\vp}{h}}
  (h^{2}\Box)u\rVert_{H_\mathrm{scl}^{1}(Q)}, \quad \lVert e^{\frac{-\vp}{h}}h^{4}B\partial_{t}u\rVert_{L^{2}(Q)}\leq \mathcal{O}(h^{3})\lVert e^{\frac{-\vp}{h}}u\rVert_{H_\mathrm{scl}^{1}(Q)},\\
  &\lVert e^{\frac{-\vp}{h}}h^{4}C\cdot \nabla_{x}u\rVert_{L^{2}(Q)}
\leq \mathcal{O}(h^{3})\lVert e^{\frac{-\vp}{h}}u\rVert_{H_\mathrm{scl}^{1}(Q)}, \quad \lVert e^{\frac{-\vp}{h}}h^{4}q u\rVert_{L^{2}(Q)} \leq \mathcal{O}(h^{4})\lVert e^{\frac{-\vp}{h}}u\rVert_{H_\mathrm{scl}^{1}(Q)}.
\end{aligned}
\end{equation*}
Hence, by adding the lower order perturbations of in \eqref{bdy_car_est}, we obtain the required estimate \eqref{Boundary-Carleman-estimate}.
\end{proof}

\begin{remark} By a density argument, the Carleman estimate \eqref{Boundary-Carleman-estimate} can be extended to any function 
\begin{equation*}     
u \in \mathcal{V} := \Biggl\{u \in \bigcap_{k=0}^2 C^k\big([0,T]; H^{2-k}(\Omega)\big) \, :
\begin{array}{l}
\Box u \in \bigcap_{k=0}^2 C^k\big([0,T]; H^{2-k}(\Omega)\big),\\
\Box^2 u \in L^2(Q) \text{ and } u \text{ satisfies } \eqref{Intial_boundary_0}
\end{array}
\Biggr\}.     
\end{equation*}
If $f := \Box^2 u \in L^2(Q)$, then we can approximate it by a sequence $f_i \in C_c^\infty(Q)$ such that $f_i \to f$ in $L^2(Q)$ as $i \to \infty$. Let $u_i$ be the solution to $\Box^2 u_i = f_i$ satisfying the boundary and initial conditions  \eqref{Intial_boundary_0}. Then, by standard regularity theory, $u_i \in C^\infty(\overline{Q})$, and the Carleman estimate \eqref{Boundary-Carleman-estimate} holds for each $u_i$. Finally, using the energy estimate \eqref{Energy_Est-1}, we conclude that the Carleman estimate \eqref{Boundary-Carleman-estimate} also holds for $u\in \mathcal{V}$. 
\end{remark}

\subsection{Integral identity involving the coefficients}
In this subsection, we derive an integral identity that relates the difference of coefficients to the difference of the associated partial input-output operator. 
Let $u_{i}$ for $i=1,2$, be the solution to the following IBVPs
\begin{equation}
\begin{aligned}\label{Equation for ui}
\begin{cases}
\mathscr{L}_{A^{(i)},B^{(i)},C^{(i)}, q^{(i)}}u_{i} =0 & \text { in } Q,\\
\left (u_{i}|_{t=0}, \partial_{t}u_{i}|_{t=0}, \partial_{t}^{2}u_{i}|_{t=0}, \partial_{t}^{3}u_{i}|_{t=0}\right)=\left(\psi_{0},\psi_{1},\psi_{2},\psi_{3}\right) &\text { on }\Omega,\\
\left( u_{i}|_{\Sigma}, (\Box u_{i})|_{\Sigma}\right)= \left ( f, g\right) &\text { on } \Sigma.
\end{cases}
\end{aligned} 
\end{equation}
Let $u(t,x):=(u_{1}-u_{2})(t,x)$ then
\begin{equation}
\begin{aligned}\label{Equation for u}
\begin{cases}
\mathscr{L}_{A^{(1)},B^{(1)},C^{(1)},q^{(1)}}u(t,x)= \left(A(t,x)\Box+B(t,x)\partial_{t} +C(t,x)\cdot \nabla_{x}+q(t,x)\right)u_{2}(t,x), & \text { in } Q,\\
\partial_{t}^{l}u(0,x)=0, \hspace{5mm} l=0,1,2,3 &\text { on }\Omega,\\
\left( u|_{\Sigma}, (\Box u)|_{\Sigma}\right)= \left ( 0, 0\right) &\text { on } \Sigma,
\end{cases}
\end{aligned} 
\end{equation}
where
\begin{equation*}
    \begin{aligned}
        \begin{cases}
            A(t,x):=(A^{(2)}(t,x)-A^{(1)}(t,x)),\quad B(t,x):=(B^{(2)}(t,x)-B^{(1)}(t,x)),\\
            C(t,x):= (C^{(2)}(t,x)-C^{(1)}(t,x)) ,\quad q(t,x):=(q^{(2)}(t,x)-q^{(1)}(t,x)).
        \end{cases}
    \end{aligned}
\end{equation*}
Let $v\in H^{2}(Q)$ solves $\mathscr{L}_{A^{(1)}, B^{(1)},C^{(1)},q^{(1)}}^{\ast}v(t,x)=0$, in $Q$ be of the form \eqref{Exponential decaying solution form}.
Now, by using the assumption \eqref{IO-operator} of Theorem \ref{Main-Result}, we have 
\begin{equation}\label{Boundy_data}
\partial_{\nu}u|_{G}=\partial_{\nu}(\Box)u|_{G}=0, \quad  \partial_{t}^lu(T,x)=0\quad \text {for } l=0,1,2.
\end{equation}
Using equations \eqref{Equation for u} and \eqref{Boundy_data}, we calculate
\[
\int_{Q}  \mathscr{L}_{A^{(1)},B^{(1)},C^{(1)},q^{(1)}}u(t,x)\, \overline{v(t,x)}\, \D t\D x - \int_{Q} u(t,x)\, \overline{\mathscr{L}^{*}_{A^{(1)},B^{(1)},C^{(1)},q^{(1)}}v(t,x)}\, \D t\D x = \Bc,
\]
where
\begin{equation*}
\begin{aligned}
    \Bc &= \int_{\Omega}\partial_{t}(\Box u(T,x))\,\overline{v(T,x)}\, \D x 
    +\int_{\Sigma\setminus G}(\partial_{\nu}(\Box u(t,x)))\, \overline{v(t,x)}\,\D S_{x}\, \D t \\
    &+\int_{\Sigma\setminus G}\partial_{\nu}u(t,x) \, \overline{\Delta v(t,x)}\, \D S_{x}\, \D t.
\end{aligned}
\end{equation*}
Using \eqref{Equation for u}, we thus obtain
\begin{equation}\label{Final Integral Identity}
\begin{aligned}      \int_{Q}\left(A(t,x)\Box+B(t,x)\partial_{t} +C(t,x)\cdot \nabla_{x}+q(t,x)\right)u_{2}(t,x)\overline{v(t,x)}\D x \D t = \Bc.
\end{aligned}
\end{equation}
We estimate the terms in $\mathcal{B}$ using the following lemma.

\begin{lemma}\label{Boundary-Vanishes}
    Let $u_{i}$ for $i=1,2$ solutions to \eqref{Equation for ui} with $u_{2}$ of the form \eqref{Exponential growing solution form}.
 Let $u=u_{1}-u_{2},$ and $v$ be of the form \eqref{Exponential decaying solution form}.
Then 
    \begin{equation}\label{1st boundary term}
\lim_{h\to 0}\int_{\Omega}h\partial_{t}(\Box u(T,x))\,\overline{v(T,x)}\,\D{x}=0,
    \end{equation}
   \begin{equation}\label{2nd boundary term}
       \lim_{h\to 0}\int_{\Sigma\setminus G}h\partial_{\nu}(\Box u(t,x)\,\overline{v(t,x)}\,\D{S_{x}}\D{t}=0,
   \end{equation}
   \begin{equation}\label{3th boundary term}
       \lim_{h\to 0}\int_{\Sigma\setminus G}h\partial_{\nu}u(t,x)\, \overline{\Delta v(t,x)}\,\D{S_{x}}\D{t}=0,
   \end{equation}
 holds for all
\begin{equation}\label{nbhd}
  \omega\in \mathcal{N}_{\ve}(\omega_{0})=\Bigl\{y\in \mathbb{S}^{n-1}: |y-\omega_{0}|\leq \ve  \quad \text{ for some }  \ve>0 \Bigr\}.
\end{equation}
\end{lemma}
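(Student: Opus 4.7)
The plan is to apply the boundary Carleman estimate \eqref{Boundary-Carleman-estimate} to $u := u_1 - u_2$, combine the resulting weighted bounds on derivatives of $u$ with direct estimates on $v$ from its GO expansion \eqref{Exponential decaying solution form}, and conclude each integral vanishes via Cauchy--Schwarz. First I would verify that $u$ satisfies the hypotheses of the boundary Carleman estimate: from \eqref{Equation for u} and \eqref{Boundy_data}, $u$ satisfies \eqref{Intial_boundary_0}; moreover, $u(T,\cdot) = \partial_t u(T,\cdot) = \partial_t^2 u(T,\cdot) = 0$ forces $\Box u(T,\cdot) = \nabla_x u(T,\cdot) = \nabla_x \Box u(T,\cdot) = 0$, so every $t=T$ term on the LHS of \eqref{Boundary-Carleman-estimate} vanishes. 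For $\omega \in \mathcal{N}_\varepsilon(\omega_0)$ with $\varepsilon$ small enough, the set $\Sigma_{-,\omega}$ is contained in $G$, so the hypothesis $\partial_\nu u = \partial_\nu \Box u = 0$ on $G$ makes the $\Sigma_{-,\omega}$ contribution vanish as well.

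Next I would estimate the source. From \eqref{Equation for u}, the source is $(A\Box + B\partial_{t} + C\cdot\nabla_x + q)u_2$. Using the growing GO form of $u_2$ in \eqref{Exponential growing solution form}, the dominant contribution after multiplication by $h^4 e^{-\varphi/h}$ is $h^3 A(2T a_{g_0} + \cdots)$, giving
\begin{equation*}
\|e^{-\varphi/h} h^4 \mathscr{L}_{A^{(1)},B^{(1)},C^{(1)},q^{(1)}} u\|^2_{L^2(Q)} = O(h^6).
\end{equation*}
Plugging this into \eqref{Boundary-Carleman-estimate}, extracting the boundary/endpoint terms individually, and solving for each weighted norm yields
\begin{gather*}
\|e^{-\varphi(T,\cdot)/h} \partial_t \Box u(T,\cdot)\|_{L^2(\Omega)} \leq Ch^{-1/2},\\
\|(\partial_\nu\varphi)^{1/2} e^{-\varphi/h} \partial_\nu \Box u\|_{L^2(\Sigma_{+,\omega})} \leq Ch^{-1/2},\qquad
\|(\partial_\nu\varphi)^{1/2} e^{-\varphi/h} \partial_\nu u\|_{L^2(\Sigma_{+,\omega})} \leq Ch^{1/2}.
\end{gather*}

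I would then estimate $v$ directly from \eqref{Exponential decaying solution form}: $\|e^{\varphi(T,\cdot)/h} v(T,\cdot)\|_{L^2(\Omega)}$ and $\|e^{\varphi/h} v\|_{L^2(\Sigma)}$ are both $O(1)$. For the third integrand I read $\overline{\Delta v}$ as $\overline{\Box v}$ (this is what integration by parts of $\Box^2$ in the derivation of $\mathcal{B}$ naturally produces) and observe the key cancellation: the leading $h^{-2}$ singularities in $\partial_t^2 v$ and $\Delta_x v$ annihilate each other in $\Box v$, leaving
\begin{equation*}
\Box v = e^{-\varphi/h}\left[-\tfrac{2}{h}\, T(b_{d_0} + hb_{d_1} + r_d) + \Box(b_{d_0} + hb_{d_1} + r_d)\right],
\end{equation*}
so $\|e^{\varphi/h} \Box v\|_{L^2(\Sigma)} = O(h^{-1})$. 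Since $\partial_\nu\varphi = \nu\cdot\omega$ is bounded below by a positive constant on $\Sigma \setminus G \subset \Sigma_{+,\omega}$, the weight $(\partial_\nu\varphi)^{-1/2}$ is bounded there, and Cauchy--Schwarz applied to each integral produces
\begin{align*}
\left|\int_\Omega h\, \partial_t\Box u(T,x)\, \overline{v(T,x)}\, dx\right|
&\leq h \cdot Ch^{-1/2} \cdot O(1) = O(h^{1/2}),\\
\left|\int_{\Sigma\setminus G} h\, \partial_\nu\Box u \cdot \overline{v}\, dS_x\, dt\right|
&\leq h \cdot Ch^{-1/2} \cdot O(1) = O(h^{1/2}),\\
\left|\int_{\Sigma\setminus G} h\, \partial_\nu u \cdot \overline{\Box v}\, dS_x\, dt\right|
&\leq h \cdot Ch^{1/2} \cdot O(h^{-1}) = O(h^{1/2}),
\end{align*}
each vanishing as $h \to 0$.

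The hard part will be recognising and exploiting the $\Box v$ cancellation: without it, the naive bound $\|e^{\varphi/h}\Delta_x v\|_{L^2(\Sigma)} = O(h^{-2})$ alone would yield $h \cdot h^{1/2} \cdot h^{-2} = h^{-1/2}$, which diverges. In addition, applying \eqref{Boundary-Carleman-estimate} to $u = u_1 - u_2 \in \mathcal{V}$ (which need not be smooth) requires the density approximation mentioned in the remark following the boundary Carleman estimate: one approximates $\Box^2 u \in L^2(Q)$ by a sequence in $C_c^\infty(Q)$ and passes to the limit using the well-posedness estimate \eqref{Energy_Est-1}.
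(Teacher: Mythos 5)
Your overall strategy is the same as the paper's: apply the boundary Carleman estimate \eqref{Boundary-Carleman-estimate} to $u=u_1-u_2$, bound the conjugated right-hand side by $O(h^6)$ using the GO form of $u_2$, read off the weighted $L^2$ bounds on $\partial_t\Box u(T,\cdot)$, $\partial_\nu\Box u$, $\partial_\nu u$, control $v$ and its derivatives from \eqref{Exponential decaying solution form}, use $(\Sigma\setminus G)\subset\Sigma_{+,\varepsilon,\omega}$ so $\partial_\nu\varphi$ is bounded below there, and close with Cauchy--Schwarz. The paper carries this out explicitly for \eqref{1st boundary term} and \eqref{2nd boundary term} and then merely asserts that \eqref{3th boundary term} follows ``by a similar analysis.''

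The one place where your treatment is genuinely more careful is precisely that third term, and your diagnosis is correct and worth recording: with the integrand literally $\overline{\Delta_x v}$, the conjugated factor $e^{\vp/h}\Delta_x v = \frac{1}{h^2}|\omega|^2 b - \frac{2}{h}\omega\cdot\nabla b + \Delta_x b$ carries an $h^{-2}$ leading term, and the Carleman estimate only yields $\|(\partial_\nu\vp)^{1/2}e^{-\vp/h}\partial_\nu u\|_{L^2(\Sigma_{+,\omega})}\leq Ch^{1/2}$, so Cauchy--Schwarz gives $h\cdot h^{1/2}\cdot h^{-2}=h^{-1/2}$, which does not tend to zero. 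Green's identity for $\Box\circ\Box$ shows the boundary term in $\Bc$ pairing with $\partial_\nu u$ is really $\overline{\Box v}$ (up to lower-order contributions such as $\overline{A^{(1)}v}$, which are $O(1)$ and harmless), and in $e^{\vp/h}\Box v$ the $h^{-2}$ coefficient is $(1-|\omega|^2)=0$, leaving an $O(h^{-1})$ quantity. That gives the convergent $O(h^{1/2})$ bound. So you have supplied the key cancellation that the paper's ``similar analysis'' phrase glosses over (and that the paper's written $\Delta v$ would actually obstruct). One small point to tighten: $\Box v$ is a priori only $L^2(Q)$, so its restriction to $\Sigma$ needs a word of justification (the smooth parts $b_{d_0},b_{d_1}$ are unproblematic; the $r_d$ contribution is only $H^2(Q)$, so one should argue via the trace of $\nabla r_d\in H^1(Q)$ for the $-\tfrac{2}{h}Tr_d$ piece and handle the $\Box r_d$ piece, e.g., by using the equation it satisfies, before applying Cauchy--Schwarz on $\Sigma$).
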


\begin{proof}
First, we prove \eqref{1st boundary term}. Using \eqref{decay_est} and the expression of $v$ from \eqref{Exponential decaying solution form} and the Cauchy-Schwartz inequality, we get
\begin{equation}\label{est_1}
 \begin{aligned}	\left|h\int\limits_{\Omega}\partial_{t}(\Box u(T,x))\overline{v(T,x)}\D x\right|
 &\leq \int\limits_{\Omega}h\left|\partial_{t}(\Box u(T,x))e^{-\frac{\vp(T,x)}{h}}\overline{\left(b_{d_{0}}(T,x)+hb_{d_{1}}(T,x)+ r_{d}(T,x;h)\right)}\right|\D x \\
 	&\leq C\,\left(\int\limits_{\Omega}h^{2}\left|\partial_{t}(\Box u(T,x))e^{-\frac{\vp(T,x)}{h}}\right|^{2}\D x\right)^{\frac{1}{2}},
 	\end{aligned}
  \end{equation}
  for some $C$ independent of $h$.
  
Now, using the boundary Carleman estimate \eqref{Boundary-Carleman-estimate}, we get 
\begin{equation}\label{Car_est}
    \begin{aligned}
    h^{7}\int\limits_{\Omega}\left|\partial_{t}(\Box u(T,x))e^{-\frac{\vp(T,x)}{h}}\right|^{2}\D x
    \leq&\, C\lVert h^{4}e^\frac{-\varphi}{h}\mathscr{L}_{A^{(1)},B^{(1)},C^{(1)},q^{(1)}}u\rVert_{L^{2}(Q)}^{2}\\
    \leq& \,C\lVert h^{4}e^{\frac{-\varphi}{h}}\left(A\Box +B\partial_{t}+C\cdot \nabla_{x}+q\right)u_{2}\rVert_{L^{2}(Q)}^{2}.
\end{aligned}
\end{equation}
Using expression for $u_2$ from \eqref{Exponential growing solution form} in \eqref{Car_est} to get 
 \begin{align}\label{est_2}
 \int\limits_{\Omega}\left|\partial_{t}(\Box u(T,x))e^{-\frac{\vp(T,x)}{h}}\right|^{2}\D x \leq \frac{C}{h}.
 \end{align}
Therefore from \eqref{est_1}, we obtain 
 \[
 \lim_{h\to 0}\int_{\Omega}h\partial_{t}(\Box u(T,x))\overline{v(T,x)}\D{x}=0.
 \]
Next, we prove \eqref{2nd boundary term}. Using \eqref{decay_est} and the expression of $v$ from \eqref{Exponential decaying solution form}, in the left-hand side of \eqref{2nd boundary term}, we have

\begin{equation}
\begin{aligned}\label{Estimate-2}	
\left|  h\int_{\Sigma\setminus G}(\partial_{\nu}(\Box u(t,x))\overline{v(t,x)}\D{S_{x}}\D{t}\right| &\leq \int\limits_{\Sigma\setminus G}h \left|(\partial_{\nu}(\Box u(t,x))e^{-\frac{\vp(t,x)}{h}}\overline{\left(b_{d_{0}}+hb_{d_{1}}+ r_{d}\right)}\right| \\
    & \leq C\,\left( \int\limits_{\Sigma\setminus G}h^{2}\left|(\PD_{\nu}(\Box u(t,x))e^{-\frac{\vp(t,x)}{h}}\right|^{2}\right)^{\frac{1}{2}}.
 \end{aligned}
 \end{equation}
 For $\ve>0$, define 
 \begin{align*}
     \partial\Omega_{+,\ve,\omega}=\{x\in \partial \Omega: \nu(x) \cdot \omega > \ve\} \text { and } \Sigma_{+,\ve,\omega}=(0,T)\times \partial \Omega_{+,\ve,\omega}.
 \end{align*}
Then from the definition of $G$ we see that there exists $\ve>0$ such that for all $\omega\in \mathcal{N}_{\ve}(\omega_{0})$, we have $(\Sigma\setminus G)\subset \Sigma_{+,\ve,\omega}$. Using this, we obtain
\begin{equation*}
    \begin{aligned}
        \int_{\Sigma\setminus G}\left|\PD_{\nu}(\Box u(t,x))e^{-\frac{\vp(t,x)}{h}}\right|^{2}\D{S_{x}}\D{t}&\leq \int_{\Sigma_{+,\varepsilon,\o}}\left|\PD_{\nu}(\Box u(t,x))e^{-\frac{\vp(t,x)}{h}}\right|^{2}\D{S_{x}}\D{t}\\
&\leq \frac{1}{\ve}\int_{\Sigma_{+,\ve,\omega}}(\PD_{\nu}\vp)\left|\PD_{\nu}(\Box u(t,x))e^{-\frac{\vp(t,x)}{h}}\right|^{2}\D{S_{x}}\D{t},
    \end{aligned}
\end{equation*}
for all $\omega\in \mathcal{N}_{\ve}(\omega_{0})$.
Now using the boundary Carleman estimate \eqref{Boundary-Carleman-estimate}, we get
\begin{align*}
h^{3}\int_{\Sigma_{+,\ve,\omega}}\PD_{\nu}\vp
\left|\PD_{\nu}(h^{2}\Box u(t,x))e^{-\frac{\vp(t,x)}{h}}\right|^{2}\D{S_{x}}\D{t}&\leq C\lVert e^{-\frac{\vp}{h}}h^{4}\mathscr{L}_{A^{(1)},B^{(1)},C^{(1)},q^{(1)}}u\lVert_{L^{2}(Q)}^{2}\\
&\leq \,C\lVert h^{4}e^{\frac{-\varphi}{h}}\left(A\Box +B\partial_{t}+C\cdot \nabla_{x}+q\right)u_{2}\rVert_{L^{2}(Q)}^{2}.
\end{align*}
Using the form of $u_2$ from \eqref{Exponential growing solution form} on the right side of the above inequality, we get
\[
\int_{\Sigma_{+,\ve,\omega}}(\PD_{\nu}\vp)
\left|\PD_{\nu}(\Box u(t,x))e^{-\frac{\vp(t,x)}{h}}\right|^{2}\D{S_{x}}\D{t} \leq \frac{C}{h}.
\]
Hence, from \eqref{Estimate-2}, we get 
\[
\lim_{h\to 0}\int_{\Sigma\setminus G}h\partial_{\nu}(\Box u(t,x)\overline{v(t,x)}\D{S_{x}}\D{t}=0.
\]
Following a similar analysis, we obtain \eqref{3th boundary term}. This completes the proof.
\end{proof}

\begin{remark}\label{Boundary_term_remark}
If we assume $A^{(1)}=A^{(2)}$, $B^{(1)}=B^{(2)}$ and $C^{(1)}=C^{(2)}$ in $Q$. Then, using the same techniques in Lemma \ref{Boundary-Vanishes}, one can show that
\begin{equation}
\begin{gathered}
\lim_{h\to 0}\int_{\Omega}\partial_{t}(\Box u(T,x))\,\overline{v(T,x)}\,\D{x}=0, \qquad 
\lim_{h\to 0}\int_{\Sigma\setminus G}\partial_{\nu}(\Box u(t,x)\,\overline{v(t,x)}\,\D{S_{x}}\D{t}=0,\\
\mbox { and } \quad 
\lim_{h\to 0}\int_{\Sigma\setminus G}\partial_{\nu}u(t,x)\, \overline{\Delta v(t,x)}\,\D{S_{x}}\D{t}=0,
\end{gathered}
\end{equation}
holds for all $\omega\in \mathcal{N}_{\ve}(\omega_{0})$.
\end{remark}

\subsection{Recovery of the coefficients}
In this section we prove Theorem ~\ref{Main-Result}, that is, $A^{(1)}=A^{(2)}$, $B^{(1)}=B^{(2)}$, $C^{(1)}=C^{(2)}$ and $q^{(1)}=q^{(2)}$ in $Q$ from the integral identity $\eqref{Final Integral Identity}$. Our approach is a generalization of \cite{Kian_Damping_2016,K.Vashisth}. 
We substitute the expression for $u_{2}$ and $v$ from \eqref{Exponential growing solution form} and \eqref{Exponential decaying solution form} into \eqref{Final Integral Identity}, to obtain
\begin{equation}\label{Integral-Identity}
\begin{aligned}
&\int_{Q}\Big \{ A(t,x)\left(\frac{2}{h}(\PD_{t}-\omega\cdot \nabla_{x})+ (\PD_{t}^{2}-\Delta_{x})\right)+B(t,x)\left(\frac{1}{h}+\partial_{t}\right)+C(t,x)\cdot\left(\frac{\omega}{h}+\nabla_{x}\right)+q(t,x)\Big\}\\
&\hspace{8.5cm} \times \left(a_{g_{0}}+ha_{g_{1}}+r_{g}\right)\overline{\left(b_{d_{0}}+hb_{d_{1}}+r_{d}\right)}\, \D{x}\D{t}=\mathcal{B}.
\end{aligned}
\end{equation}
Multiplying $h$ with \eqref{Integral-Identity} and using the bounds for $r_g$, $r_d$ from \eqref{decay_est}, \eqref{Exponential decaying solution form}, we get
\begin{equation*}
\begin{aligned}
&\int_{Q}\left( 2A(t,x)(\PD_{t}-\omega\cdot \nabla_{x})+B(t,x)+\omega\cdot C(t,x)\right) a_{g_{0}}\overline{b_{d_{0}}} \, \D{x}\D{t} +  
\int_{Q} \mathcal{O}(h) = h\mathcal{B}.
\end{aligned}
\end{equation*}
Further, applying Lemma \ref{Boundary-Vanishes} on $\mathcal{B}$ and taking $h\to 0$, we obtain
\begin{equation}\label{Identity-1}
\int_{Q}\Big \{2A(t,x)(\partial_{t}-\omega\cdot\nabla_{x})a_{g_{0}}\overline{b_{d_{0}}}
+\widetilde{B}(t,x)\cdot(1,-\omega)a_{g_{0}}\overline{b_{d_{0}}}\Big\}\, \D{x}\D{t}=0.
\end{equation}
Here $\widetilde{B}(t,x):=(B, -C)(t,x)$ and holds for all $\omega\in \mathcal{N}_{\ve}(\omega_{0})$. 
Now we choose $a_{g_{0}}=e^{-\mathrm{i}\xi\cdot(t,x)}$ and $b_{d_{0}}=1$, with $\xi\cdot(1,-\omega)=0$. Note that, with the above choices, $a_{g_{0}}$ and $b_{d_{0}}$ solves the transport equations \eqref{First-Transport-eqn} and \eqref{Transport for adjoint eqns} respectively. Since $\widetilde{B}^{(1)},\widetilde{B}^{(2)} \in W^{2,\infty}(Q)$ and $\widetilde{B}^{(1)}=\widetilde{B}^{(2)}$ on $\partial Q$, we extend $\widetilde{B}$ on $\R^{1+n}\setminus Q$ by zero and denote the extended function by the same notation. Using the above choices of $a_{g_0}$ and $b_{d_0}$, \eqref{Identity-1} now reads
\begin{equation}\label{Identity-2}
    \int_{\R^{1+n}}(1,-\omega)\cdot \widetilde{B}(t,x)e^{-\mathrm{i}\xi\cdot (t,x)}\D{x}\D{t}=0, \quad \text{where $\xi\cdot (1,-\omega)=0$ for all $\omega\in\mathcal{N}_{\ve}(\omega_{0})$}.
\end{equation}
Decomposing $\R^{1+n}=(1,-\omega)^{\perp}\oplus \R(1,-\omega)$ and using Fubini's theorem we see
\begin{align}\label{Iden_Fourier}
\int_{(1,-\omega)^{\perp}}e^{-\mathrm{i}\xi\cdot k}\left(\sqrt{2}\int_{\R}(1,-\omega)\cdot \widetilde{B}(k+\tau(1,-\omega))\D{\tau}\right)\D{k}=0, 
\end{align}
holds true for all $\omega\in\mathcal{N}_{\ve}(\omega_{0})$, where $\D k$ denotes the Lebesgue measure on $(1,-\omega)^{\perp}$.
Let 
\[
\phi(y) := \sqrt{2}\int_{\R}(1,-\omega)\cdot \widetilde{B}(y+\tau(1,-\omega))\,\D{\tau}
\in L^{1}((1,-\omega)^{\perp}).
\]
From \eqref{Iden_Fourier} and the injectivity of the Fourier transform, we get
\[ 0 = \phi(y)
= \sqrt{2}\int_{\R}(1,-\omega)\cdot \widetilde{B}(y+\tau(1,-\omega))\,\D{\tau}
\quad \mbox{for all } y \in (1,-\omega)^{\perp} \text { and } \omega \in \mathcal{N}_{\ve}(\omega_{0}).
\]
Now, we use a change of variables to get the above identity for all $(t,x)\in \R^{1+n}$. For any $(t,x)\in \R^{1+n}$ and we see 
\[
\left(\frac{t+x\cdot\omega}{2}, x+\frac{t-x\cdot\omega}{2}\omega\right) \in (1,-\omega)^{\perp}.
\]
Therefore,
\[
\int_{\R}(1,-\omega)\cdot \widetilde{B}\left(\frac{t+x\cdot\omega}{2}+\tau, x+\frac{(t-x\cdot\omega)\omega}{2}-\tau\omega\right)\D{\tau}=0,
\]
for all $\omega\in\mathcal{N}_{\ve}(\omega_{0})$ and for all $(t,x)\in (1,-\omega)^{\perp}.$
Using a change in variable
\[
 s=\frac{x\cdot\omega -t}{2}+\tau,
 \]
we obtain
\begin{equation}\label{light_ray_vector_field}
    \int_{\R}(1,-\omega)\cdot \widetilde{B}(t+s,x-s\omega)\D{s}=0, \quad \text { for all $(t,x)\in \R^{1+n}$ and $\omega\in\mathcal{N}_{\ve}(\omega_{0})$ }.
\end{equation}
Using the injectivity result of the light ray transform for vector fields from \cite[Section 6.2]{K.Vashisth}, we obtain
$\wt{B}(t,x)=\nabla_{t,x}\Phi(t,x)$ for some $\Phi\in W^{3,\infty}(Q)$ with $\Phi|_{\partial Q}=0$.

Substituting $\wt{B}(t,x)=\nabla_{t,x}\Phi(t,x)$ in \eqref{Identity-1} with the choices $a_{g_{0}}(t,x)=e^{-\mathrm{i}\xi\cdot (t,x)}$, where $(1,-\omega)\cdot \xi=0$ and $b_{d_{0}}(t,x) = (1,-\omega) \cdot(t,x)$ we perform integration by parts, and obtain
\[
\int_{Q}\Phi(t,x)(1,-\omega)\cdot\left(\nabla_{t,x}\right) e^{-\mathrm{i}\xi\cdot(t,x)}\D{x}\D{t}=0, \quad \text{for all $\xi\in (1,-\omega)^{\perp}$ and $\omega \in \mathcal{N}_{\ve}(\omega_{0})$}.
\]
Extending $\Phi$ by zero outside $Q$, we get
\[
\int_{\R^{1+n}}\Phi(t,x)e^{-\mathrm{i}\xi\cdot(t,x)}\, \D{x}\D{t}=0, \quad \text { for all $\xi\in (1,-\omega)^{\perp}$ and $\omega \in \mathcal{N}_{\ve}(\omega_{0})$}.
\]
Therefore $\hat{\Phi}(\xi)=0$ for all $\xi \in \R^{1+n}$ such that $\xi\cdot (1,-\omega)$.
The set of all $\xi$ such that $\xi\in (1,-\omega)^{\perp}$ for some $\omega\in \mathcal{N}_{\ve}(\omega_{0})$ forms an open cone when $n\geq 3$ (see \cite{Siamak_2018}).
Since $\Phi$ is a compactly supported, using Paley-Wiener theorem then it follows that $\Phi(t,x)=0$, and hence $\wt{B}(t,x)=0$. This proves $B^{(1)}(t,x)=B^{(2)}(t,x) $ and $C^{(1)}(t,x)=C^{(2)}(t,x)$ in $Q$.

Next, to prove $A^{(1)}(t,x)=A^{(2)}(t,x)$ in $Q$, we go back to \eqref{Identity-1} and substitute $\wt{B}(t,x)=0$ to get
\begin{align}\label{Identity-5}
\int_{Q}A(t,x)\left((\partial_{t}-\omega\cdot \nabla)a_{g_{0}}(t,x)\right)\overline{b_{d_{0}}(t,x)}\, \D{x}\D{t}=0, \quad \text{for all $\omega \in \mathcal{N}_{\ve}(\omega_{0})$}.
\end{align}  
Now, take $a_{g_{0}}(t,x)=e^{-\mathrm{i}\xi\cdot(t,x)}(1,-\omega)\cdot(t,x)$, $b_{d_{0}}(t,x)=1$ in \eqref{Identity-5}, and extend $A\in W^{3,\infty}(Q)$ by zero outside $Q$ to obtain
\[
\int_{\R^{1+n}}A(t,x)e^{-\mathrm{i}\xi\cdot (t,x)}\D{x}\D{t}=0, \quad \text{ for all } \xi\in (1,-\omega)^{\perp} \text { and } \omega \in \mathcal{N}_{\ve}(\omega_{0}).
\]
This implies that $\hat{A}(\xi)=0$ for all $\xi \in (1,-\omega)^{\perp}$ where $\omega\in\mathcal{N}_{\ve}(\omega_{0})$. Since $A$ is compactly supported, using Paley–Wiener theorem we conclude $A(t,x)=0$ in $Q$. This shows $A^{(1)}(t,x)=A^{(2)}(t,x)$ in $Q$.

Now it remains to show $q^{(1)}(t,x)=q^{(2)}(t,x)$ in $Q$. Putting $A^{(1)}=A^{(2)}$, $B^{(1)}=B^{(2)}$ and $C^{(1)}=C^{(2)}$ in the integral identity \eqref{Final Integral Identity} we get
\[
\begin{aligned}
&\int_{Q}q(t,x)u_{2}(t,x)v(t,x)\, \D{x}\D{t}\\
&=
\int_{\Omega}\partial_{t}(\Box u(T,x))\,\overline{v(T,x)}\,\D{x} + 
\int_{\Sigma\setminus G}\partial_{\nu}(\Box u(t,x)\,\overline{v(t,x)}\,\D{S_{x}}\D{t}
+ \int_{\Sigma\setminus G}\partial_{\nu}u(t,x)\, \overline{\Delta v(t,x)}\,\D{S_{x}}\D{t}.
\end{aligned}
\]
Here we take $h \to 0$ and use Remark \ref{Boundary_term_remark} to obtain
\begin{equation}\label{Identity-6}
\int_{Q}q(t,x)a_{g_{0}}(t,x)\overline{b_{d_{0}}(t,x)}\D{x}\D{t}=0, \text { for all $\omega \in \mathcal{N}_{\ve}(\omega_{0})$.}
\end{equation}
Choosing $a_{g_{0}}(t,x)=e^{-\mathrm{i}\xi\cdot(t,x)}$ and $b_{d_{0}}(t,x)=1$ satisfying \eqref{First-Transport-eqn} and \eqref{Transport for adjoint eqns}, from \eqref{Identity-6} we obtain
\[
\hat{q}(\xi)= \int_{\R^{1+n}}q(t,x)e^{-\mathrm{i}\xi\cdot(t,x)}\D{x}\D{t}=0, \quad \text { for all } \xi\in (1,-\omega)^{\perp} \text { with } \omega \in \mathcal{N}_{\ve}(\omega_{0}).
\]
Since $q\in L^{\infty}(Q)$, extended by zero outside $Q$, therefore, using Paley--Wiener theorem we have $q^{(1)}(t,x)=q^{(2)}(t,x)$ in $Q$. This completes the proof of Theorem \ref{Main-Result}.

\section*{Acknowledgment}
The authors thank the Department of Mathematics at the Indian Institute of Science Education and Research (IISER), Bhopal, for their invaluable support. This work was initiated when PK was a graduate student at IISER-Bhopal during 2024. He is currently supported by the Postdoctoral Scholarship at YMSC, Tsinghua University.
Finally, we thank the anonymous reviewers for suggestions.

\bibliographystyle{alpha}
\bibliography{reference.bib}

\end{document}